\pdfoutput=1
\documentclass[twoside]{article}

\usepackage[preprint]{aistats2026}

\usepackage[utf8]{inputenc} %
\usepackage{cmap}
\usepackage[T1]{fontenc}    %

\let\PolishL\L   %

\usepackage{multicol}
\usepackage{silence}
\WarningFilter{remreset}{The remreset package}
\usepackage{amsthm}
\usepackage{amsmath,amsfonts,amssymb,thmtools}
\usepackage{mathtools}
\usepackage{xparse}
\usepackage{xargs}
\usepackage{enumitem} %
\usepackage{etoolbox}
\usepackage{hhline}
\usepackage{mathrsfs}	
\usepackage{dsfont}
\usepackage{tikz}			
\usepackage{mathtools}
\usepackage{complexity}
\usepackage{svg}
\usepackage{nag} 					%
\usepackage{arydshln} %
\usepackage{booktabs} %
\usepackage{soul}
\usepackage{setspace}
\usepackage{xurl} %

\usepackage[linktocpage=true, colorlinks=true, allcolors=blue, linktocpage]{hyperref}

\usepackage[capitalise,nameinlink,noabbrev]{cleveref} %
\crefname{equation}{}{}

\newtheorem{example}{Example}
\newtheorem{theorem}{Theorem}
\newtheorem{lemma}[theorem]{Lemma}
\newtheorem{proposition}[theorem]{Proposition}
\newtheorem{remark}[theorem]{Remark}

\crefname{enumi}{Statement}{Statements} %

\makeatletter
\newcommand*\ie{\textit{i.e.}\@ifnextchar.{\@gobble}{\relax}}
\newcommand*\vs{\textit{vs.}\@ifnextchar.{\@gobble}{\relax}}
\newcommand*\etc{\textit{etc.}\@ifnextchar.{\@gobble}{\relax}}
\newcommand*\eg{\textit{e.g.}\@ifnextchar.{\@gobble}{\relax}}
\newcommand*\cf{\textit{cf.}\@ifnextchar.{\@gobble}{\relax}}
\makeatother

\newcommand{\norm}[1]{\| #1 \|} 
 
\newcommand{\abs}[1]{\lvert #1 \rvert} 
\NewDocumentCommand{\infnorm}{s O{} m }{%
  \IfBooleanTF{#1}{\norm*{#3}}{\norm[#2]{#3}}_{\infty}%
}
\NewDocumentCommand{\twonorm}{s O{} m }{%
  \IfBooleanTF{#1}{\norm*{#3}}{\norm[#2]{#3}}_2%
}
\NewDocumentCommand{\tvnorm}{s O{} m }{%
  \IfBooleanTF{#1}{\norm*{#3}}{\norm[#2]{#3}}_{\textup{TV}}%
}
\NewDocumentCommand{\onenorm}{s O{} m }{%
  \IfBooleanTF{#1}{\norm*{#3}}{\norm[#2]{#3}}_1%
}
\NewDocumentCommand{\frobnorm}{s O{} m }{%
  \IfBooleanTF{#1}{\norm*{#3}}{\norm[#2]{#3}}_F%
}
\NewDocumentCommand{\scalar}{s O{} >{\SplitArgument{1}{,}}m}{%
    \IfBooleanTF{#1}{\scalaraux*#3}{\scalaraux[#2]#3}%
}
\DeclarePairedDelimiterX{\scalaraux}[2]{\langle}{\rangle}{#1, #2}

\newcommand*\circledaux[1]{\tikz[baseline=(char.base)]{
    \node[shape=circle,draw,inner sep=0.8pt] (char) {#1};}}

\NewDocumentCommand{\circled}{m o }{%
    \IfNoValueTF{#2}{\circledaux{#1}}{\stackrel{\circledaux{#1}}{#2}}%
}

\usepackage{color}
\definecolor{mygreen}{rgb}{0,0.6,0}
\definecolor{mygray}{rgb}{0.5,0.5,0.5}
\definecolor{mymauve}{rgb}{0.58,0,0.82}
\definecolor{darkgray}{rgb}{0.35,0.35,0.35}

\newcommand{\defi}{\stackrel{\mathrm{\scriptscriptstyle def}}{=}}
\newcommand{\defiin}{\stackrel{\mathrm{\scriptscriptstyle def}}{\in}}
\renewcommand*\R{\mathbb{R}}                              %
\let\epsilon\varepsilon

\def\dodeffunction#1:#2->#3;#4\relax
    {\ifblank{#4}
    {#1\colon#2\to#3}
    {\!\begin{aligned}#1\colon#2&\to#3\\ \dodeffunctionaux#4\relax\end{aligned}}}
\def\dodeffunctionaux#1->#2\relax{#1&\mapsto#2}

\newcommand\dual[1]{#1^\ast}                                %
\usepackage{soul}

\NewDocumentCommand{\enorm}{s O{} m}{%
    \IfBooleanTF{#1}{\norm*{#3}}{\norm[#2]{#3}}_{\E}%
}
\NewDocumentCommand{\denorm}{s O{} m}{%
    \dual{\IfBooleanTF{#1}{\enorm*{#3}}{\enorm[#2]{#3}}}%
}

\NewDocumentCommand{\commasaux}{m m}{%
    \IfNoValueTF{#2}{#1}{#1, #2}%
}
\NewDocumentCommand{\prodaux}{m m}{%
    \IfNoValueTF{#2}{#1 \times #1}{#1 \times #2}%
}

\renewcommand{\E}[1]{\mathbb{E}\left[ #1 \right]} 
\renewcommand{\L}{\mathscr{L}}

\DeclareMathOperator*{\argmin}{arg\,min}                %

\makeatletter
\renewcommand\paragraph{\@startsection{paragraph}{4}{\z@}%
                                    {0ex \@plus0.5ex \@minus.2ex}%
                                    {-1em}%
                                    {\normalfont\normalsize\bfseries}}
\makeatother

\newcommand{\innp}[1]{\langle #1 \rangle}

\newcommand\blfootnote[1]{%
\begingroup
\renewcommand\thefootnote{}\footnote{#1}%
\addtocounter{footnote}{-1}%
\endgroup
}
%


\usepackage[natbib, backend=biber, maxcitenames=3, maxbibnames=99, style=alphabetic, hyperref, backref, useprefix=true, uniquename=false, doi=false,url=false,eprint=false]{biblatex} 


\usepackage{csquotes}               
\bibliography{refs}        

\DeclareCiteCommand{\cite}
  {\usebibmacro{prenote}}
  {\usebibmacro{citeindex}%
   \printtext[bibhyperref]{\usebibmacro{cite}}}
  {\multicitedelim}
  {\usebibmacro{postnote}}

\DeclareCiteCommand*{\cite}
  {\usebibmacro{prenote}}
  {\usebibmacro{citeindex}%
   \printtext[bibhyperref]{\usebibmacro{citeyear}}}
  {\multicitedelim}
  {\usebibmacro{postnote}}

\DeclareCiteCommand{\parencite}[\mkbibparens]
  {\usebibmacro{prenote}}
  {\usebibmacro{citeindex}%
    \printtext[bibhyperref]{\usebibmacro{cite}}}
  {\multicitedelim}
  {\usebibmacro{postnote}}

\DeclareCiteCommand*{\parencite}[\mkbibparens]
  {\usebibmacro{prenote}}
  {\usebibmacro{citeindex}%
    \printtext[bibhyperref]{\usebibmacro{citeyear}}}
  {\multicitedelim}
  {\usebibmacro{postnote}}

\DeclareCiteCommand{\citeauthor}
  {\usebibmacro{prenote}}
  {\ifciteindex
     {\indexnames{labelname}}
     {}%
   \printtext[bibhyperref]{\printnames{labelname}}}
  {\multicitedelim}
  {\usebibmacro{postnote}}

\DeclareCiteCommand{\footcite}[\mkbibfootnote]
  {\usebibmacro{prenote}}
  {\usebibmacro{citeindex}%
  \printtext[bibhyperref]{ \usebibmacro{cite}}}
  {\multicitedelim}
  {\usebibmacro{postnote}}

\DeclareCiteCommand{\footcitetext}[\mkbibfootnotetext]
  {\usebibmacro{prenote}}
  {\usebibmacro{citeindex}%
   \printtext[bibhyperref]{\usebibmacro{cite}}}
  {\multicitedelim}
  {\usebibmacro{postnote}}

\DeclareCiteCommand{\textcite}
  {\boolfalse{cbx:parens}}
  {\usebibmacro{citeindex}%
   \printtext[bibhyperref]{\usebibmacro{textcite}}}
  {\ifbool{cbx:parens}
     {\bibcloseparen\global\boolfalse{cbx:parens}}
     {}%
   \multicitedelim}
  {\usebibmacro{textcite:postnote}}


\newbibmacro{string+doiurlisbn}[1]{%
  \iffieldundef{doi}{%
    \iffieldundef{url}{%
      \iffieldundef{isbn}{%
        \iffieldundef{issn}{%
          #1%
        }{%
          \href{http://books.google.com/books?vid=ISSN\thefield{issn}}{#1}%
        }%
      }{%
        \href{http://books.google.com/books?vid=ISBN\thefield{isbn}}{#1}%
      }%
    }{%
      \href{\thefield{url}}{#1}%
    }%
  }{%
    \href{https://doi.org/\thefield{doi}}{#1}%
  }%
}

\DeclareFieldFormat{title}{\usebibmacro{string+doiurlisbn}{\mkbibemph{#1}}}
\DeclareFieldFormat[article,incollection,inproceedings]{title}%
    {\usebibmacro{string+doiurlisbn}{\mkbibquote{#1}}}

\usepackage{algorithm}
\usepackage{algcompatible}

\AtBeginEnvironment{algorithmic}{\setstretch{1.15}} 

\algnewcommand{\lst}{\texttt{lst}}
\algnewcommand{\slst}{\texttt{slst}}
\algnewcommand{\SEND}{\textbf{send}}

\newsavebox{\algleft}
\newsavebox{\algright}

\makeatletter
\newcounter{algorithmicH}
\let\oldalgorithmic\algorithmic
\renewcommand{\algorithmic}{%
  \stepcounter{algorithmicH}
  \oldalgorithmic}
\renewcommand{\theHALG@line}{ALG@line.\thealgorithmicH.\arabic{ALG@line}}
\makeatother

\makeatletter

\makeatother

\input{includes/definitions.tex}

\begin{document}

\twocolumn[

\aistatstitle{Smooth Quasar-Convex Optimization with Constraints}

\aistatsauthor{David Martínez-Rubio}

\aistatsaddress{IMDEA Software Institute \\
  Madrid, Spain \\
  \texttt{\href{mailto:david.martinezrubio@imdea.org}{david.martinezrubio@imdea.org}} } ]

\begin{abstract}
    Quasar-convex functions form a broad nonconvex class with applications to linear dynamical systems, generalized linear models, and Riemannian optimization, among others. Current nearly optimal algorithms work only in affine spaces due to the loss of one degree of freedom when working with general convex constraints. Obtaining an accelerated algorithm that makes nearly optimal $\bigotilde{1/(\gamma\sqrt{\epsilon})}$ first-order queries to a $\gamma$-quasar convex smooth function \emph{with constraints} was independently asked as an open problem in \citet{martinezrubio2022global,lezane2024accelerated}.
In this work, we solve this question by designing an inexact accelerated proximal point algorithm that we implement using a first-order method achieving the aforementioned rate and, as a consequence, we improve the complexity of the accelerated geodesically Riemannian optimization solution in \citet{martinezrubio2022global}.
We also analyze projected gradient descent and Frank-Wolfe algorithms in this constrained quasar-convex setting. To the best of our knowledge, our work provides the first analyses of first-order methods for quasar-convex smooth functions with general convex constraints.
\blfootnote{\color{darkgray}Most of the non-local notations in this work have a link to their definitions, using \href{https://damaru2.github.io/general/notations_with_links/}{this code}, such as ${\protect\hyperlink{def:quasar_convexity_constant}{\color{darkgray}\oldgamma}}$, which links to where this notation is defined as the quasar-convexity parameter of the functions we optimize in this work.}
\end{abstract}

\section{INTRODUCTION}

Nonconvex optimization has become central to modern machine learning, yet our theoretical understanding of why simple first-order methods succeed in these settings remains limited. Algorithms such as stochastic gradient descent routinely achieve both efficient optimization and strong statistical performance, even though the underlying problems are nonconvex and, in principle, could exhibit a highly irregular landscape with spurious local minima or saddle points. This disconnect motivates a line of research seeking structural properties of nonconvex problems that explain the empirical successes of local-search algorithms.

Classical theory is built around convex optimization, where every local minimum is global, and where powerful tools of duality are available. However, the binary distinction between convex and nonconvex problems is too coarse: many important nonconvex problems exhibit benign structure that allows for efficient optimization. A rich body of work has introduced relaxations of convexity that retain the global optimality of local minima, including star-convexity, essential strong convexity, restricted secant inequalities, one-point strong convexity, variational coherence, quasiconvexity, pseudoconvexity, invexity, the Polyak-\PolishL{}ojasiewicz (PL) condition, tilted convexity, and the strict-saddle property without spurious local minima \citep{karimi2016linear,ge2015escaping,hinderworkshop2020nearoptimal,martinezrubio2022global}.

Besides, it has been shown that all local minimizers are global in a number of machine learning tasks such as  problems in phase retrieval \citep{sun2016geometric}, tensor decomposition \citep{ge2015escaping}, dictionary learning \citep{sun2015complete}, matrix sensing \citep{bhojanapalli2016global,park2016nonsquare}, and matrix completion \citep{ge2016matrixcompletion}. Moreover, under overparameterization assumptions, gradient descent provably finds global minimizers in neural networks \citep{allen2019convergence,du2019global,nguyen2020global,zou2018sgd,du2018provably}. These results highlight a growing recognition that the landscape of many machine learning objectives is more favorable than worst-case nonconvexity suggests and motivates the study of optimization under benign nonconvexity.

Among the relaxations of convexity, \emph{quasar convexity} has recently emerged as a particularly compelling property. It is a generalization of star-convexity, where the slope of the classical affine lower bound given by a gradient and its function value is multiplied by a number greater than $1$, and one only requires for this to bound the function at a specific solution, cf. \cref{sec:preliminaries_and_setting}. This class of functions allows for acceleration under smooth objectives, in the \emph{unconstrained case} and the essentially equivalent case where there is an affine constraint. When the problem has general constraints, current solutions lose one degree of freedom, and current accelerated solutions and analyses do not work.

Important examples of quasar-convex functions include linear dynamical systems \citep{hardt2018gradient}, several generalized linear models (\newtarget{def:acronym_generalized_linear_model}{\GLM{}s}) \citep{foster2018uniform,wang2023continuized}, and geodesically convex problems in constant-curvature Riemannian spaces after appropriate reductions \citep{martinezrubio2022global}. Further, the landscape of neural networks has been observed to empirically satisfy quasar-convexity along the trajectory of gradient descent methods with respect to the reached solution \citep{kleinberg2018alternative,zhou2019sgd,tran2024empirical}.

Despite the progress in the study of optimization with quasar convexity, constrained optimization in this regime remains poorly understood. %
Whether acceleration is possible by first-order methods for smooth quasar-convex problems with general convex constraints has been posed as an open question in independent works \citep{martinezrubio2022global,lezane2024accelerated}, where a fast method in the presence of a compact convex constrained would improve the accelerated Riemannian optimization method in \citep{martinezrubio2022global}. In this paper, we resolve this question in the affirmative by providing an accelerated \emph{constrained} first-order method that matches the lower bound of \citep{hinderworkshop2020nearoptimal}, up to logarithmic factors. Our results extend the scope of quasar-convex optimization to restricted domains, thereby broadening its applicability to machine learning problems with natural structural restrictions. To the best of our knowledge, we provide the first analyses of first-order methods for smooth quasar-convex problems with general constraints.

\paragraph{Main Contributions.}
Our contributions can be summarized as follows:
\begin{enumerate}
    \item We design an accelerated \emph{constrained} first-order method for $\L$-smooth $\gamma$-quasar-convex functions with respect to a minimizer $x^\ast$ in a compact convex feasible set of diameter $\D$, finding an $\epsilon$-minimizer in the nearly optimal %
$\bigotildel{\gamma^{-1}\sqrt{\L\D^2 / \epsilon}}$
        queries to a first-order oracle, where $x_0$ is an initial point, solving the open question in \citep{martinezrubio2022global,lezane2024accelerated}. The solution involved designing an implementable inexact accelerated proximal method along with the design of a line search under adversarial noise due to the inexactness in the proximal solution.
    \item Our algorithm implies improved complexity over the Riemannian optimization solution in \citep{martinezrubio2022global}, compared to prior work on tilted convexity, we achieve faster acceleration and under weaker assumptions.
    \item We show that projected gradient descent as well as for the Frank-Wolfe algorithm converge at the unaccelerated rate $\bigo{\L\D^2/(\gamma^2\epsilon)}$ for $\L$-smooth $\gamma$-quasar convex functions with constraints. 
\end{enumerate}

\section{PRELIMINARIES AND SETTING}\label{sec:preliminaries_and_setting}

\paragraph{Notation.}  
We denote by $\newtarget{def:approximate_argmin}{\argmindelta[x \in \X]} f(x)$ the set of $\delta$-minimizers of $f$ over a feasible set $\newtarget{def:feasible_set}{\X}$. A function is said to be differentiable on a closed (possibly non-open) set $\X$ if is differentiable in an open neighborhood of $\X$. The set indicator function is $\newtarget{def:set_indicator}{\indicator{X}}(x) = 0$ if $x\in\X$ and $\indicator{X}(x) = +\infty$ otherwise. We use $\newtarget{def:big_o_tilde}{\bigotilde{\cdot}}$ as the big-O notation omitting logarithmic factors. For a set of points, we denote its convex hull by $\newtarget{def:convex_combination}{\conv{S}}$. We denote the Euclidean projection operator by $\newtarget{def:projection_operator}{\proj{\X}}(x) \defi \argmin_{y\in\X} \norm{y-x}_2$.

A first-order oracle for $f$ is an operator that, given a query point $x \in \R^d$, returns the pair $(f(x), \nabla f(x))$.

\paragraph{$\gamma$-quasar convexity.} For $\newtarget{def:quasar_convexity_constant}{\gamma} \in (0, 1]$, a differentiable function is said to be $\gamma$-quasar convex in $\X$ with center $x^\ast \in \X$ if $x^\ast \in \argmin_{x\in\X} f(x)$ and for every $x \in \X$: 
\[
    f(x^\ast) \;\geq\; f(x) + \frac{1}{\gamma} \innp{\nabla f(x), x^\ast - x}.
\]
If $\gamma = 1$, the function is said to be star convex. Note that by the definition above, any point $x$ satisfying $\nabla f(x) = 0$ is also a minimizer.

\paragraph{$\L$-smoothness.}  
A differentiable function $f$ has $\newtarget{def:smoothness_constant}{\L}$-Lipschitz gradients in $\X$ if
\[
    \norm{\nabla f(x) - \nabla f(y)}_2 \leq \L \norm{x - y}_2, \qquad \forall x,y \in \X.
\]
Equivalently, $f$ satisfies
\[
    \abs{f(x) - f(y) - \innp{\nabla f(y), x-y}} \;\leq\; \frac{\L}{2}\norm{x-y}_2^2,
\]
for all $x,y \in \X$. This property is also often called $\L$-smoothness, more commonly in convex scenarios, where the absolute value above is redundant. Lastly, we say that a differentiable function $f$ is $\newtarget{def:strong_convexity}{\mu}$-strongly convex in $\X$, for $\mu > 0$, if for all $x,y \in \X$, we have
\[
    f(x) - f(y) - \innp{\nabla f(y), x-y} \geq \frac{\mu}{2}\norm{x-y}_2^2.
\]

\paragraph{Problem setting.}  
We study the problem
\begin{equation}\label{eq:problem_setting}
    \min_{x \in \X} f(x),
\end{equation}
for methods with access to a first-order oracle of $f$, where $\X \subset \R^d$ is a compact convex set of diameter $\newtarget{def:diameter_of_feasible_set}{\D}$, and $f:\R^d \to \R$ is both $\L$-smooth and $\gamma$-quasar convex in $\X$ with center denoted by $x^\ast \in \X$. Our aim is to obtain an $\newtarget{def:accuracy_epsilon}{\epsilon}$-minimizer $\hat{y}$ of the problem above, that is, $f(\hat{y}) - \min_{x\in\X}f(x) \leq \epsilon$.

A relaxation of convexity stronger than $\gamma$-quasar convexity is $(\gamma, \gamma_p)$-tilted convexity. For $\gamma, \gamma_p \in (0, 1]$ and all $x, y$ in a closed convex set $\X$, a tilted-convex function satisfies
\begin{align*}  \label{eq:tiltedconvexity}
 \begin{aligned}
     f(x) +  \frac{1}{\gamma}\innp{\nabla f(x), y-x}\leq f(y)  & {\text{ if } \innp{\nabla f(x), y-x}}\leq 0, \\
     f(x)+\gamma_p\innp{\nabla f(x), y-x} \leq f(y)  & {\text{ if } \innp{\nabla f(x), y-x}} \geq 0.
   \end{aligned}
\end{align*}
This is clearly stronger than $\gamma$-quasar convexity since the first line above includes the quasar-convex property if $x = x^\ast$. We note that \citep[Theorem 5]{martinezrubio2022global} provided a first-order method for optimizing a $(\gamma, \gamma_p)$-tilted convex function with $\L$-Lipschitz gradients in a compact convex set $\X$ of diameter $\D$ in time 
$\bigotilde{\sqrt{\L\D^2/(\gamma^2 \gamma_p \epsilon)}}$,
where the complexity contains a logarithmic dependence on the Lipschitz constant of $f$ in $\X$. Our main result, cf. \cref{thm:acc_constrained_quasar_cvx}, shows that under the weaker assumption of $\gamma$-quasar convexity, we obtain faster convergence $\bigotilde{\sqrt{\L\D^2/(\gamma^2 \epsilon)}}$, where the dependence of the complexity on $\gamma_p$ and the logarithm of the Lipschitz constant of $f$ disappear. Note that a smooth constrained optimization problem could have a moderate smoothness constant and at the same time have an arbitrary large Lipschitz constant.

Our solution uses the structure of an approximate proximal point method. The following related notions are important.  For a function $f$ and a regularization parameter $\newtarget{def:moreau_regularization_parameter}{\lambda} > 0$, the Moreau envelope is defined as
\[
    \newtarget{def:Moreau_Yoshida_envelope}{\M[\lambda f]}(x) \defi \inf_{y} \left\{ f(y) + \frac{1}{2\lambda} \norm{y-x}_2^2 \right\}.
\] 
We call the subproblem in the definition of $\M[\lambda f](x)$ the proximal subproblem, and denote $\newtarget{def:prox_point}{\prox[\lambda f]}(x) \defi \argmin\left\{ f(y) + \frac{1}{2\lambda} \norm{y-x}_2^2 \right\}$. If $f$ and $\lambda$ are clear from context, we will simply use $\M(x)$, $\prox[](x)$. If the proximal subproblem has a unique solution, then the Moreau envelope is differentiable and it is 
\[
\nabla \M(x) = \frac{1}{\lambda} (x - \prox[](x)),
\] 
see \citep{bertsekas2003convex}.

\section{RELATED WORK}

\citet{hardt2018gradient} %
introduced the quasar convex class, where it was named weak quasi-convexity. They provided an analysis of stochastic gradient descent for a weakly smooth quasar convex objective.
\citet{guminov2023accelerated} %
proposed an accelerated algorithm for smooth quasar convex functions with a search over a $2$-dimensional affine space. Their solution was inspired by the first nearly accelerated first-order method for convex smooth functions \citep{nemirovski1982orth,nemirovski_acceleration} and also by \citet{narkiss2005sequential}, that builds on the former. %
\citet{nesterov2021primal} %
followed up on the previous work, providing a universal algorithm that simplified the plane search to a line search and allowed for affine constraints. Because the feasible set is still an affine space, no degree of freedom is lost with respect to the unconstrained case. 
Later, \citet{hinder2020nearoptimal} %
also proposed an accelerated algorithm with line search, quantifying the time that it would take for a binary search to run, which required bounding the algorithm's iterates, and they proved nearly matching lower bounds, and coined the term quasar-convexity. They also studied strong quasar-convex problems.

\citet{wang2023continuized} extended the continuized approach for acceleration of \citet{even2021continuized} in order to obtain a randomized method that with high probability obtains an $\epsilon$ minimizer in $\bigo{\sqrt{\frac{LR^2}{\epsilon}}}$ iterations, that is, without $\log$ factors. 
\citet{lezane2024accelerated} generalized accelerated solutions to non-Euclidean and weak smoothness function classes. \citet{jin2020convergence} studied gradient norm minimization under quasar-convexity.
\citet{gower2021sgd,vaswani2022towards,fu2023accelerated} studied the optimization of stochastic quasar convex functions with adaptivity guarantees. %
\citet{saad2025new} provided lower bounds for stochastic quasar-convex functions. 
When the problem is Lipschitz \emph{non-smooth}, standard regret-minimization algorithms generalize to work for the best iterate with constrained settings for quasar convex problems, see \citet{joulani2020modular}.

Regarding the applicability of quasar-convexity, \citet{hardt2018gradient} show that several linear dynamical systems optimization tasks are quasar-convex.
\citet{foster2018uniform} show that \GLM{s}, that use losses of the form $w \mapsto (\sigma(\innp{w, x} -y))$ for a datapoint $(x, y)$, are quasar convex if the link function $\sigma$ satisfies boundedness conditions on its first two derivatives, although they stated a weaker property. \citet{wang2023continuized} provide further examples of families of \GLM{s} that are quasar-convex. \citet{foster2018uniform} also showed that a robust linear regression problem is quasar convex. \citet{martinezrubio2022global} reduced geodesically-convex optimization in constant curvature Riemannian Manifolds to constrained tilted-convex problems in the Euclidean space, and therefore to quasar-convex problems.

Our main algorithm is built as an inexact accelerated proximal point method. Obtaining accelerated algorithms by designing an inexact accelerated proximal point method and implementing the approximate proximal subproblem with first-order methods is a technique that has yielded general and fruitful frameworks, cf. \citep{monteiro2013, %
lin2017catalyst, %
frostig2015un-regularizing, %
ivanova2021adaptive, %
carmon2022recapp}, among many others.
A few works exploit the structure of some non-convex functions, such as weak convexity, in order to obtain a convex or strongly convex subproblem for the proximal point method, the first one of which could be \citep[Proposition 1]{kaplan1998proximal}. In a similar vein, \citep{davis2019stochastic} used the proximal point method with a regularizer value guaranteeing good properties for the proximal subproblem, and provide guarantees of stationarity for stochastic algorithms.

\citet{millan2025frank} developed an analysis showing rates for the Frank-Wolfe method under smoothness and star convexity, a special case of our problem setting.

\section{THE ACCELERATED METHOD}
Accelerated optimization for first-order methods in convex smooth optimization is an extensively studied topic. Initially considered an algebraic trick devoid of intuition, Nesterov's accelerated gradient descent \citep{nesterov1983method} has been explained and generalized via many efforts and interpretations yielding powerful frameworks \citep{nesterov2005smooth,monteiro2013,allenzhu2017linear,levy2018online,joulani2020simpler,wang2024noregret}. One point of view is that of linear coupling \citep{allenzhu2017linear} where the authors reinterpret an optimal algorithm like \citet[Section 3]{nesterov2005smooth} as a combination of gradient descent and an online learning algorithm like mirror descent, where the function value progress of gradient descent compensates the per-iterate regret of mirror descent. This analysis is done separately for unconstrained and constrained cases. 

Mirror descent or FTRL online-learning algorithms, cf. \citep{orabona2019modern}, on quasar-convex functions suffer a greater regret with respect to the convex counterpart, by a factor depending on $1/\gamma$. In the \emph{unconstrained} case this remains manageable: both the progress from gradient descent and the regret that one wants to compensate, still become proportional to the squared norm of the gradient at the currently computed point. A suitable coupling between the gradient descent and FTRL points can balance the proportionality constants and restore acceleration, provided one does a low-dimensional search \citep{hinder2020nearoptimal,guminov2023accelerated,nesterov2021primal}. In the \emph{constrained} case, however, this symmetry breaks: the usual descent proxy does not seem to necessarily compensate the increased regret, and a direct linear-coupling proof does not seem to go through. We lose a degree of freedom on how to choose the coupling.

Our remedy is an inexact implementation of the proximal point step, that is, computing an inexact version of $\prox(x)$. This approach offers the possibility of greater descent in exchange for greater computational expense in the step implementation. And further, one step of an approximate proximal point can be seen as an approximate gradient descent step on the associated Moreau envelope, with the property that this gradient descent step always lands in the feasible set $\X$ \citep{bauschke2011convex}, analogously to the unconstrained case. This leads us to investigate the properties of accelerated proximal point methods and the associated Moreau envelope for quasar convex smooth problems.

The proximal subproblem of a $\gamma$-quasar convex function $f$ is not necessarily quasar convex in general, not even if we allow for a different resulting quasar-convex constant. Indeed, since a center $x^\ast$ of a quasar convex function is a minimizer, it is enough to consider some quasar convex function plus a small quadratic centered at some point $x_0$ that shifts the minimizer of the sum towards $x_0$ with respect to $x^\ast$. The function $f$ does not even necessarily satisfy star convexity around $x_0$, which may make the sum not quasar convex. Alternatively, see \cref{ex:quasar_convex_function} for a concrete counterexample, which we display in \cref{fig:non_quasar_after_regularization}. Hence, the quasar-convex class is not closed under addition, even if we allow the parameter $\gamma$ to change after the sum. We claim that this non-additivity is natural and holds more generally for relaxations of convexity. Indeed, \citet[Section 2.1.1]{nesterov2018lectures} showed that if a class $\mathcal{F}$ of functions $f: \R^d \to \R$ satisfies (A) $\nabla f(x^\ast) = 0$ implies $x^\ast$ is a global solution, (B) the class is closed under conic combinations and (C) affine functions belong to the class, then $\mathcal{F}$ is the class of convex functions. As a consequence, there is no class larger than the one of convex functions that satisfies (A) and (B) at the same time. Therefore, relaxations satisfying (A) like quasar convexity, star convexity, tilted convexity, etc. cannot be closed under conic combinations. Similarly, a classical reduction adds the regularizer $x \mapsto \frac{\epsilon}{R^2}\norm{x_0-x}_2^2$, where $R$ is a bound for the distance from $x_0$ to a minimizer in order to exploit some strong convexity. The above observation suggests that this reduction is unlikely to hold for many relaxations of convexity satisfying (A). Likewise, computing the proximal point operator for general $\lambda > 0$ may lead to subproblems that either lack a minimizer, fail to have a unique solution, or are non-convex.

\begin{figure}[h!]
\centering
        \scalebox{1}[1.2]{\includegraphics[width=\columnwidth]{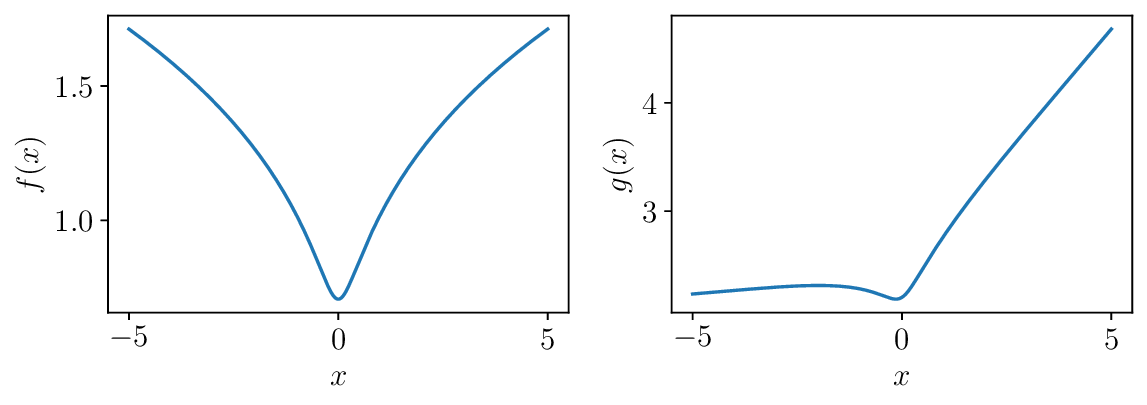}}
    \caption{\cref{ex:quasar_convex_function}; a $(1 / 2)$-quasar convex function $f(x) \defi (x^2+1/8)^{1 / 6}$ in $[-5, 5]$ (left) and its proximal subproblem $g(x) \defi f(x) + \frac{1}{2\lambda}(x-x_0)^2$, for $\lambda = 50$ and $x_0 \approx -12.23$ (right). The latter has a local maximizer at $x=-2$ and a local minimizer near $x=0$ and so it cannot be quasar convex in $[-5, 5]$.}
    \label{fig:non_quasar_after_regularization}
\end{figure}

The considerations above are negative results in the general case regarding combining functions for classes weaker than convexity. However, there are some special cases that we can exploit: by definition, if we add two functions that are $\gamma$-quasar \emph{with respect to the same center} $x^\ast$, then the sum becomes $\gamma$-quasar-convex. In particular, when adding $\indicator{\X}$ to $f$, provided $x^\ast\in\X$ and $f$ is $\gamma$-quasar-convex on $\X$ with respect to $x^\ast$, the composite function $f+\indicator{\X}$ preserves quasar-convexity around the same center. Similarly, for $(\gamma, \gamma_p)$-tilted convex problems any restriction to a compact convex set $\X$ results in a $\gamma$-quasar convex problem in $\X$. This is the case the problem appearing after the reduction from Riemannian optimization in \citep{martinezrubio2022global}. 
However, in order to exploit an inexact proximal point step, we further prove several properties of the Moreau envelope. If our original problem is $\L$-smooth, and if we choose $\lambda < 1 / \L$, we obtain a convex proximal subproblem where the strong convexity from the regularizer makes the subproblem be strongly convex with $O(1)$ condition number, as the following lemma shows.

\begin{lemma}\label{lemma:properties_of_prox_and_moreau_env_smoothness}\linktoproof{lemma:properties_of_prox_and_moreau_env_smoothness}
    Let $f:\Rd\to\R$ have $\L$-Lipschitz gradients on $\X$, and $\lambda = 1/(2\L )$. For any $x \in \X$, the proximal problem $\min_{y\in\X} f(y) + \frac{1}{2\lambda}\norm{y-x}_2^2$ is $\L$-strongly convex, $3\L $-smooth, and the associated Moreau envelope $\M[\lambda f + \indicator{\X}]$ is $2\L $-smooth.
\end{lemma}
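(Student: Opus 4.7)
The plan is to first establish strong convexity and smoothness of the subproblem $h_x(y) \defi f(y) + \frac{1}{2\lambda}\norm{y-x}_2^2 = f(y) + L\norm{y-x}_2^2$ by combining the two-sided inequality from $L$-smoothness of $f$ with the exact Taylor expansion of the quadratic regularizer. For any $a,b\in\X$, $L$-smoothness of $f$ gives $|f(a) - f(b) - \innp{\nabla f(b), a-b}| \leq \tfrac{L}{2}\norm{a-b}_2^2$, while the quadratic $L\norm{\cdot - x}_2^2$ satisfies the identity $L\norm{a-x}_2^2 - L\norm{b-x}_2^2 - \innp{2L(b-x), a-b} = L\norm{a-b}_2^2$. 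Summing yields
\[
\tfrac{L}{2}\norm{a-b}_2^2 \;\leq\; h_x(a) - h_x(b) - \innp{\nabla h_x(b), a-b} \;\leq\; \tfrac{3L}{2}\norm{a-b}_2^2,
\]
which gives $L$-strong convexity from the lower bound. For the gradient Lipschitzness, I would simply use $\nabla h_x(y) = \nabla f(y) + 2L(y-x)$ and note it is the sum of an $L$-Lipschitz and a $2L$-Lipschitz map, hence $3L$-Lipschitz.

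For the Moreau envelope, strong convexity of $h_x$ over the compact convex set $\X$ ensures that the proximal map $p(x) \defi \prox[\lambda(f+\indicator{\X})](x)$ is single-valued, so by the identity already stated in the preliminaries, $\M[\lambda f + \indicator{\X}]$ is differentiable with $\nabla \M[\lambda f + \indicator{\X}](x) = \frac{1}{\lambda}(x - p(x)) = 2L(x - p(x))$. It thus suffices to prove that the residual map $x \mapsto x - p(x)$ is $1$-Lipschitz. I would write the first-order optimality conditions at $p_i \defi p(x_i)$ over $\X$,
\[
\innp{\nabla f(p_i) + 2L(p_i - x_i),\; z - p_i} \geq 0, \quad \forall z \in \X,\; i=1,2,
\]
and plug in $z = p_{3-i}$, adding the two resulting inequalities to obtain
\[
\innp{\nabla f(p_1) - \nabla f(p_2),\; p_1 - p_2} + 2L\norm{p_1-p_2}_2^2 \;\leq\; 2L\innp{x_1 - x_2,\; p_1 - p_2}.
\]
The crucial step is to handle the (possibly negative) inner product on the left: $L$-smoothness of $f$ implies $f + \tfrac{L}{2}\norm{\cdot}_2^2$ is convex, which gives $\innp{\nabla f(p_1) - \nabla f(p_2),\, p_1-p_2} \geq -L\norm{p_1-p_2}_2^2$. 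Substituting yields $\innp{x_1-x_2,\, p_1-p_2} \geq \tfrac{1}{2}\norm{p_1-p_2}_2^2$, and expanding
\[
\norm{(x_1-x_2) - (p_1-p_2)}_2^2 = \norm{x_1-x_2}_2^2 - 2\innp{x_1-x_2,\, p_1-p_2} + \norm{p_1-p_2}_2^2 \leq \norm{x_1-x_2}_2^2
\]
finishes the argument, giving $\norm{\nabla \M[\lambda f + \indicator{\X}](x_1) - \nabla \M[\lambda f + \indicator{\X}](x_2)}_2 \leq 2L\norm{x_1-x_2}_2$.

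The main obstacle is that nonconvexity of $f$ prevents directly invoking the classical firm-nonexpansiveness of the proximal operator of a convex function. The strong convexity contributed by the quadratic regularizer must exactly outweigh the weak concavity arising from $L$-smoothness, and the prescribed choice $\lambda = 1/(2L)$ is precisely the threshold at which the subproblem remains strongly convex with a constant-factor margin, so that the Moreau envelope inherits gradient Lipschitzness with the clean constant $2L$.
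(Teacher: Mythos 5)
Your argument for the $L$-strong convexity and $3L$-smoothness of the proximal subproblem is essentially the same as the paper's: add the two-sided quadratic bound from $L$-smoothness of $f$ to the exact quadratic identity for the regularizer. That part matches.

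For the $2L$-smoothness of the Moreau envelope you take a genuinely different — and in fact stronger — route. The paper proves only the one-sided inequality $\M(y)\le \M(x)+\innp{\nabla\M(x),y-x}+\frac{1}{2\lambda}\norm{x-y}_2^2$ by observing that $\M$ is an infimum of quadratics with leading coefficient $\frac{1}{2\lambda}$, each touching the envelope from above. You instead establish the full two-sided property, $\norm{\nabla\M(x_1)-\nabla\M(x_2)}_2\le 2L\norm{x_1-x_2}_2$, by showing the residual map $x\mapsto x-p(x)$ is nonexpansive: you add the variational inequalities at $p(x_1)$ and $p(x_2)$, absorb the cross term using the $L$-weak convexity of $f$ (i.e.\ $\innp{\nabla f(p_1)-\nabla f(p_2),p_1-p_2}\ge -L\norm{p_1-p_2}_2^2$), and expand the square. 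This is a cleaner and more robust argument: the one-sided quadratic upper bound alone does not imply Lipschitzness of $\nabla\M$ for a nonconvex $\M$ (consider $x\mapsto -\frac{1}{2}x^2$, which satisfies the one-sided bound with any nonnegative coefficient yet has a Lipschitz gradient with a nontrivial constant), and the paper later actually invokes the full Lipschitzness of $\nabla\M$ when bounding $\abs{g'(\alpha_1)-g'(\alpha_2)}$ inside the binary-search analysis. Your nonexpansiveness argument delivers exactly what is used downstream, so in this respect your proposal is the more complete proof of the lemma as stated.
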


The property above on the Moreau envelope and the subproblems allows to solve proximal subproblems fast and to approximate Moreau envelope's gradients and function values: it would take $\bigotilde{1}$ iterations of projected gradient descent (\newtarget{def:acronym_projected_gradient_descent}{\PGD{}}), cf. \cref{eq:projected_gradient_descent_update}, to achieve a $\delta$-minimizer of the subproblem \citep[Section 3.4.2]{bubeck15convexmonograph}. Now, a key property is that at least when the proximal subproblem has a unique solution, as guaranteed by \cref{lemma:properties_of_prox_and_moreau_env_smoothness},  the Moreau envelope inherits the $\gamma$-quasar-convexity from $f$. 

\begin{proposition}\label{lemma:quasar_convexity_of_moreau_env}\linktoproof{lemma:quasar_convexity_of_moreau_env}
    Let $f:\Rd\to\R$ be $\L$-smooth $\gamma$-quasar convex in a closed convex set $\X$ with respect to a center $x^\ast \in \X$. For any $\lambda < \frac{1}{\L}$, the Moreau envelope $\M[\lambda f  + \indicator{\X}]$ is $\gamma$-quasar convex with respect to $x^\ast$.
\end{proposition}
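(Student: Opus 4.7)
The plan is to leverage the first-order optimality (variational) inequality for the proximal subproblem together with the quasar convexity of $f$ at the proximal point, and to show that the extra quadratic term appearing in the Moreau envelope is absorbed by a slack factor of the form $\frac{1}{\gamma}-\frac{1}{2}\geq 0$.

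Let me abbreviate $p(x)\defi\prox[\lambda(f+\indicator{\X})](x)=\argmin_{y\in\X}\{f(y)+\frac{1}{2\lambda}\norm{y-x}_2^2\}$, which is a well-defined singleton for $\lambda<1/L$: the subproblem has Hessian with eigenvalues in $[\tfrac{1}{\lambda}-L,\tfrac{1}{\lambda}+L]$, so it is $(\tfrac{1}{\lambda}-L)$-strongly convex on $\X$. This justifies the formula $\nabla \M[\lambda f+\indicator{\X}](x)=\frac{1}{\lambda}(x-p(x))$ stated in the preliminaries. First I will observe that $p(x^\ast)=x^\ast$ and therefore $\M[\lambda f+\indicator{\X}](x^\ast)=f(x^\ast)$; moreover for any $x\in\R^d$ we have $\M[\lambda f+\indicator{\X}](x)\geq \inf_{y\in\X}f(y)=f(x^\ast)$, so $x^\ast$ is indeed a global minimizer of the Moreau envelope.

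The main chain of inequalities goes as follows. Since $p(x)\in\X$, quasar convexity of $f$ applied at $p(x)$ gives
\[
    f(x^\ast)\geq f(p(x))+\tfrac{1}{\gamma}\innp{\nabla f(p(x)),x^\ast-p(x)}.
\]
Next, since $p(x)$ minimizes the proximal subproblem over $\X$, the standard first-order variational inequality with test point $x^\ast\in\X$ yields
\[
    \innp{\nabla f(p(x))+\tfrac{1}{\lambda}(p(x)-x),x^\ast-p(x)}\geq 0,
\]
which rearranges to $\innp{\nabla f(p(x)),x^\ast-p(x)}\geq\tfrac{1}{\lambda}\innp{x-p(x),x^\ast-p(x)}$. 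Combining the two bounds and using the polarization-type identity $\innp{x-p(x),x^\ast-p(x)}=\innp{x-p(x),x^\ast-x}+\norm{x-p(x)}_2^2$ gives
\[
    f(x^\ast)\geq f(p(x))+\tfrac{1}{\gamma\lambda}\innp{x-p(x),x^\ast-x}+\tfrac{1}{\gamma\lambda}\norm{x-p(x)}_2^2.
\]

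To conclude, I rewrite the right-hand target in terms of $\M$ and $\nabla\M$: $\M[\lambda f+\indicator{\X}](x)+\tfrac{1}{\gamma}\innp{\nabla\M[\lambda f+\indicator{\X}](x),x^\ast-x}=f(p(x))+\tfrac{1}{2\lambda}\norm{p(x)-x}_2^2+\tfrac{1}{\gamma\lambda}\innp{x-p(x),x^\ast-x}$. Comparing with the previous display, the desired quasar-convex inequality $\M[\lambda f+\indicator{\X}](x^\ast)\geq \M[\lambda f+\indicator{\X}](x)+\tfrac{1}{\gamma}\innp{\nabla\M[\lambda f+\indicator{\X}](x),x^\ast-x}$ reduces to $\tfrac{1}{\gamma\lambda}\norm{x-p(x)}_2^2\geq \tfrac{1}{2\lambda}\norm{x-p(x)}_2^2$, which holds because $\gamma\in(0,1]$. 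The main conceptual step is noticing that the VI at $p(x)\in\X$ (which is valid even though $f$ need not be convex) is exactly what transfers the quasar-convex slope bound from $f$ to $\M$, and the only delicate calculation is verifying that the regularization quadratic $\tfrac{1}{2\lambda}\norm{p(x)-x}_2^2$ is dominated by the $\tfrac{1}{\gamma\lambda}\norm{x-p(x)}_2^2$ term produced by the polarization identity; everything else is bookkeeping.
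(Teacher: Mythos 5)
Your proof is correct and follows essentially the same route as the paper's: both combine the quasar-convexity of $f$ at the proximal point with the first-order optimality condition of the proximal subproblem (you phrase it as the variational inequality; the paper equivalently works with a subgradient of $\indicator{\X}$ in the normal cone), decompose $x^\ast - p(x)$ through $x$, and observe that the leftover quadratic term is nonnegative because $\gamma \le 1$. The only minor difference is that you handle a general $\lambda < 1/L$ cleanly, while the paper's write-up specializes to $\lambda = 1/(2L)$.
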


\begin{algorithm}
    \caption{Constrained Quasar-Cvx Acceleration}
    \label{alg:constrained_quasar_cvx_acceleration}
\begin{algorithmic}[1]
    \REQUIRE $\gamma$-quasar convex $\L$-smooth $f$ in a compact convex set $\X$, accuracy $\epsilon$, initial point $x_0$, tolerance $\delta$.
    \State $y_0 \gets z_0 \gets x_0$; 
    \State $\lambda \gets 1 / (2\L )$ 
    \State $T\gets \frac{4}{\gamma}\sqrt{\frac{\L\D^2}{\epsilon}}$
    \vspace{0.1cm}
    \hrule
    \vspace{0.1cm}
       \FOR{$ t \gets 1 $ \textbf{to} $T$} 
           \State $a_t \gets \gamma^2 t / (8\L )$; \ $A_t{=}\sum_{i=1}^t a_i = \gamma^2 t(t+1) / (16 \L)$
           \State $x_{t}{\gets}\hyperref[alg:binary_search]{\operatorname{BinaryLineSearch}}(f, y_{t-1}, z_{t-1},\L, \delta, \frac{A_{t-1}\gamma}{a_t})$  \label{line:call_to_bin_search}
           \State $y_{t} \in \argmindelta[y\in\X] \set{ f(y) + \frac{1}{2\lambda} \norm{x_t - y}_2^2 }$ \label{line:inexact_prox}\Comment{Using, for instance, \newlink{def:acronym_projected_gradient_descent}{{\color{mygray}PGD}} \cref{eq:projected_gradient_descent_update}.} 
           \State $\gradmtilde(x_t) \gets \frac{1}{\lambda} (x_t - y_t)$ \label{line:gradmtilde}
           \State $z_{t}\gets\argmin_{z\in\X}\set{\sum_{i=1}^{t}\frac{a_i}{\gamma}\innp{\gradmtilde(x_{i}), z}{+}\frac{\norm{z-x_0}_2^2}{2}}$ \label{line:acc_ftrl} \ENDFOR
        \State \textbf{return} $\hat{y}_T \in \argmindelta[y\in\X] \set{ f(y) + \frac{1}{2\lambda} \norm{y_T - y}_2^2 }$.
\end{algorithmic}
\end{algorithm}

Since we are only able to approximate the Moreau envelope, we have to deal with errors in our algorithm design and analysis. For the inexact Moreau envelope function value and gradient, omitting the notation's dependence on $\lambda$ and $f$, we use:
\[
    \newtarget{def:approximate_moreau_envelope}{\mtilde}(x)\defi f(y_x) + \tfrac{1}{2\lambda}\|y_x-x\|_2^2
 \ \ \text{and}\ \
\newtarget{def:approximate_gradient_of_moreau_envelope}{\gradmtilde}(x)\defi \tfrac{1}{\lambda}(x-y_x),
\]
where $y_x$ is any $\delta$-minimizer of $y\mapsto f(y)+\tfrac{1}{2\lambda}\|y-x\|_2^2$ over $\X$. We note that if we computed an exact minimizer, these notions would correspond to the exact function values and gradients for the envelope $\M$.

We set the total number of iterations to $T\defi\tfrac{4}{\gamma}\sqrt{\tfrac{\L\D^2}{\epsilon}}$ and fix the inner accuracy at $\delta \defi \L\D^2 / (10T^6)$ and thus the time taken for \PGD{} \cref{eq:projected_gradient_descent_update} to compute $\delta$-minimizers of proximal subproblems, like $y_t$ in Line \ref{line:inexact_prox} of \cref{alg:constrained_quasar_cvx_acceleration}, is nearly a constant
\[
\bigo{\log\big(\frac{\L\norm{x-\prox(x)}_2^2}{\delta}\big)} = \bigo{\log\big(\frac{\L\D^2}{\epsilon}\big)}=\bigotilde{1}.
\]
There is a simple stopping criterion that guarantees we achieved $\delta$-optimality, see \cref{lemma:stopping_criteria_of_pgd_subroutine_for_prox} in \cref{app:binary_search}.

Our accelerated \cref{alg:constrained_quasar_cvx_acceleration} uses three sequences:
(i) $y_t$ are approximate proximal points with parameter \(\lambda=1/(2\L )\);
(ii) $z_t$ are iterates from FTRL that control regret on the approximate envelope gradients; and
(iii) $x_t\in\conv{y_{t-1},z_{t-1}}$ are coupling points chosen by a noisy binary search (\cref{alg:binary_search}), where the noise comes from our inexactness to compute the proximal point and so it is bounded but otherwise it can be arbitrary.

The following lemma is an approximate version to the descent lemma of gradient descent, where the progress made after the inexact gradient descent step ${y_t = x_t - \lambda \gradmtilde(x_t)}$ is proportional to the squared norm of the inexact gradient, up to an additive error.
\begin{lemma}\label{lemma:approximate_descent_lemma}\linktoproof{lemma:approximate_descent_lemma}
    For $\lambda = 1 /(2\L )$, and $x_t, y_t$ from \cref{alg:constrained_quasar_cvx_acceleration}, let $\mtilde(x_t) \defi f(y_t) + \frac{1}{2\lambda}\norm{y_t - x_t}_2^2$ and  $\mtilde(y_t) = f(\hat{y}_t) + \frac{1}{2\lambda}\norm{\hat{y}_t - y_t}_2^2$, for $\hat{y}_t \in \argmindelta[y\in\X] \set{f(y) + \frac{1}{2\lambda}\norm{y - y_t}_2^2}$. Then
    \[
   \mtilde(y_t) - \mtilde(x_t) \leq  - \frac{1}{8\L }\norm{\gradmtilde(x_t)}_2^2 + \delta.
    \] 
\end{lemma}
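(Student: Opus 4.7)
The plan is to exploit a simple feasibility trick: at the point $y_t \in \X$, the prox subproblem centered at $y_t$ admits $y = y_t$ itself as a feasible candidate, yielding value $f(y_t)$. This bounds the exact Moreau envelope and, up to the $\delta$ slack from the inexact minimizer, also bounds $\mtilde(y_t)$. Against this, $\mtilde(x_t)$ already contains the term $\tfrac{1}{2\lambda}\|y_t - x_t\|_2^2 = \tfrac{\lambda}{2}\|\gradmtilde(x_t)\|_2^2$ by the very definition of $\gradmtilde(x_t)$, which produces the descent quantity.

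Concretely, I would proceed as follows. First, rewrite the two approximate envelope values in a directly comparable form: using $\gradmtilde(x_t) = \tfrac{1}{\lambda}(x_t - y_t)$, we have $\|y_t - x_t\|_2^2 = \lambda^2 \|\gradmtilde(x_t)\|_2^2$, so
\[
    \mtilde(x_t) \;=\; f(y_t) + \frac{\lambda}{2}\,\|\gradmtilde(x_t)\|_2^2.
\]
Second, for $\mtilde(y_t)$, use that $\hat{y}_t$ is a $\delta$-minimizer over $\X$ of $y \mapsto f(y) + \tfrac{1}{2\lambda}\|y-y_t\|_2^2$, whose infimum is the exact Moreau envelope $\M[\lambda f + \indicator{\X}](y_t)$. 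Since $y_t \in \X$, plugging $y=y_t$ into that subproblem shows $\M[\lambda f + \indicator{\X}](y_t) \leq f(y_t)$, and therefore
\[
    \mtilde(y_t) \;\leq\; \M[\lambda f + \indicator{\X}](y_t) + \delta \;\leq\; f(y_t) + \delta.
\]
Third, subtract the two expressions:
\[
    \mtilde(y_t) - \mtilde(x_t) \;\leq\; \delta - \frac{\lambda}{2}\,\|\gradmtilde(x_t)\|_2^2.
\]
With $\lambda = 1/(2L)$ we have $\lambda/2 = 1/(4L) \geq 1/(8L)$, and the claimed bound follows (in fact with the stronger constant $1/(4L)$, but the looser $1/(8L)$ suffices for later use in the acceleration argument).

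There is no real obstacle here: the result is essentially a bookkeeping statement that converts the gradient-descent-on-the-Moreau-envelope viewpoint into an inequality usable under inexact prox solutions. The only place to be careful is the definition of $\delta$-approximate minimizer, which I interpret (consistent with the \emph{Notation} paragraph) as an additive $\delta$ gap in the objective of the subproblem; under that convention, the inequality $\mtilde(y_t) \le M_{\lambda f + \indicator{\X}}(y_t) + \delta$ is immediate and the rest is algebra. Smoothness of the Moreau envelope from \cref{lemma:properties_of_prox_and_moreau_env_smoothness} is not actually needed for this particular lemma, though it justifies that $\gradmtilde(x_t)$ is a meaningful approximation of $\nabla \M[\lambda f + \indicator{\X}](x_t)$.
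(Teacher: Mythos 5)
Your proof is correct and takes essentially the same route as the paper: use feasibility of $y_t$ in its own prox subproblem to get $\M(y_t)\le f(y_t)$, add the $\delta$ slack from inexact minimization, and rewrite $\mtilde(x_t)=f(y_t)+\tfrac{\lambda}{2}\|\gradmtilde(x_t)\|_2^2$ to extract the descent term. You even obtain the cleaner constant $1/(4L)$ (the paper's displayed chain only exhibits $1/(8L)$, apparently losing a factor of $2$ in bookkeeping), which is fine since the lemma only claims $1/(8L)$.
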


Our accelerated solution also requires to find a point $x_t \in \conv{y_{t-1}, z_{t-1}}$ such that an inequality approximately like convexity between $y_{t-1}$ and $x_t$ holds for $M$. If we had convexity of $M$ and $x_t = \alpha y_{t-1} + (1-\alpha) z_{t-1}$, for some $\alpha \in [0, 1]$, we would have $\innp{\nabla M(x_t), x_t - z_{t-1}} = \frac{\alpha}{1-\alpha}\innp{\nabla M(x_t), y_{t-1} - x_t} \leq \frac{\alpha}{1-\alpha} (M(y_{t-1}) - M(x_t))$ and varying $\alpha\in [0, 1]$ we could get any positive weight on the right hand side. Instead, we guarantee a similar property on $\mtilde$ by performing a binary search over the segment $\conv{y_{t-1}, z_{t-1}}$, but we only have access to noisy evaluations of the function and gradient, which could potentially make the binary search branch to the wrong path. However, we keep an invariant that, by using smoothness of the objective, guarantees the maintained interval in the search contains an interval satisfying the termination condition that is large enough to guarantee early termination. The oracles $h,\hat h$ required by \cref{alg:binary_search} are implemented via $\mtilde(\cdot)$ and $\gradmtilde(\cdot)$, respectively.

\begin{algorithm}
    \caption{Binary Line Search($f$, $y$, $z$, $\L$, $\delta$, $c$)}
    \label{alg:binary_search}
\begin{algorithmic}[1]
    \REQUIRE $\L$-smooth function $f$ with domain $\X$, points $y, z$, smoothness constant $\L$, tolerance $\delta$, constant $c \geq 0$. Define $\lambda \defi 1 / (2\L )$, $g(\alpha) \defi \M[\lambda f + \indicator{\X}](\alpha y + (1-\alpha)z)$. Assume access to $h(\alpha), \hat{h}(\alpha) \in \R$ satisfying
    $ g(\alpha) \leq h(\alpha) \leq g(\alpha) + \delta_1$ and $\abs{g'(\alpha) - \hat{h}(\alpha)}  \leq \delta_2$. Target error: $\tildeepsilon$, cf. \cref{eq:delta-conds}.
    \vspace{0.1cm}
    \hrule
    \vspace{0.1cm}
    \State \textbf{if} $\hat{h}(1) \leq \tildeepsilon$ \textbf{then return} $1$ \label{line:first_check}
    \State \textbf{if} $h(1) - h(0) \geq - \delta_1$ \textbf{then return} $0$ \label{line:second_check}
    \State $a_0 \gets 0$; \quad $b_0 \gets 1$; \quad $\alpha \gets (a_0 + b_0) / 2$; \quad $k \gets 0$
    \WHILE {$\alpha \hat{h}(\alpha) > c (h(1)-h(\alpha)) + \tildeepsilon$}
        \IF{$h(\alpha) \geq h(1) - \delta_1$} 
        \State $a_{k+1} \gets \alpha$; \ \ $b_{k+1} \gets b_k$
        \ELSE
        \State $a_{k+1} \gets a_k$; \ \ $b_{k+1} \gets \alpha$ \label{line:update_of_ak_and_bk}
        \ENDIF
    \State  $k \gets k+1$
    \State $\alpha \gets (a_k + b_k) / 2$;
    \ENDWHILE
    \State \textbf{return} $\alpha y + (1-\alpha) z$
\end{algorithmic}
\end{algorithm}

\begin{lemma}\label{lemma:binary_search}\linktoproof{lemma:binary_search}
    With the notation of \cref{lemma:approximate_descent_lemma} and Line \ref{line:call_to_bin_search} of \cref{alg:constrained_quasar_cvx_acceleration}, and for any target constant $c\geq 0$, then \cref{alg:binary_search} returns a point $x_t$ satisfying
\begin{align*}
     \begin{aligned}
         \innp{\gradmtilde(x_t), x_t-z_{t-1}} &\leq c(\mtilde(y_{t-1}) - \mtilde(x_t)) \\
        &\ \ \ + \sqrt{8\L\D^2\delta} + (9 + 5c) \delta,
     \end{aligned}
\end{align*}
    and it computes $x_t$ after $\delta$-approximating a $\prox[\lambda f + \indicator{\X}]$ at most $\bigol{\log\left(\frac{\L\D^2}{\delta}\right)}$ times.
\end{lemma}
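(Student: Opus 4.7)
The plan is to prove both assertions by relating the noisy oracles $h, \hat h$ to $g(\alpha) \defi \M[\lambda f + \indicator{\X}](x(\alpha))$, with $x(\alpha) \defi \alpha y + (1-\alpha) z$, and by exploiting the $2L$-smoothness of the Moreau envelope from \cref{lemma:properties_of_prox_and_moreau_env_smoothness}. I will take $h(\alpha) \defi \mtilde(x(\alpha))$ and $\hat h(\alpha) \defi \innp{\gradmtilde(x(\alpha)), y-z}$: since \cref{lemma:properties_of_prox_and_moreau_env_smoothness} makes the proximal subproblem $L$-strongly convex, any $\delta$-minimizer $y_x$ of it satisfies $\norm{y_x - y_x^\ast}_2 \leq \sqrt{2\delta/L}$, whence $\norm{\gradmtilde(x) - \nabla \M[\lambda f + \indicator{\X}](x)}_2 \leq 2\sqrt{2L\delta}$. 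This pins down the oracle tolerances as $\delta_1 = \delta$ and $\delta_2 \leq \sqrt{8LD^2\delta}$. Simultaneously, because $\M[\lambda f + \indicator{\X}]$ is $2L$-smooth and $\norm{y-z}_2 \leq \D$, the function $g$ is $2LD^2$-smooth in $\alpha$; these two estimates drive the entire argument.

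For the inequality, the key identity is $\alpha \hat h(\alpha) = \innp{\gradmtilde(x_t), x_t - z}$ (using $x_t - z = \alpha(y-z)$), together with $h(1) - h(\alpha) = \mtilde(y) - \mtilde(x_t)$, so the while-loop exit condition $\alpha \hat h(\alpha) \leq c(h(1)-h(\alpha)) + \tildeepsilon$ is already the desired inequality with additive error $\tildeepsilon$ (plus at most one extra $\sqrt{8LD^2\delta}$ should the $\delta$-minimizer used inside $\hat h$ at termination not be reused when $\gradmtilde(x_t)$ is recomputed in Line~\ref{line:gradmtilde} of \cref{alg:constrained_quasar_cvx_acceleration}). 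The two early-return paths are treated separately: Line~\ref{line:first_check} returns $x_t = y$ and the inequality collapses to $\hat h(1) \leq \tildeepsilon$, which is the exit test; Line~\ref{line:second_check} returns $x_t = z$, so the left-hand side is $0$, and the exit test $h(1)-h(0)\geq -\delta_1$ together with $\mtilde - \M[\lambda f + \indicator{\X}] \in [0, \delta]$ gives $c(\mtilde(y) - \mtilde(z)) \geq -O(c\delta)$. Setting $\tildeepsilon \defi \sqrt{8LD^2\delta} + (9+5c)\delta$ absorbs every noise term in all three paths.

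For the iteration bound, I will maintain the invariant
\[
g(a_k) \geq g(1) - 2\delta \quad\text{and}\quad g(b_k) \leq g(1),
\]
by induction on $k$: whenever $a$ moves to $\alpha$, the condition $h(\alpha) \geq h(1) - \delta_1$ together with $|h - g|\leq \delta$ yields $g(\alpha) \geq g(1) - 2\delta$; whenever $b$ moves to $\alpha$, the condition $h(\alpha) < h(1) - \delta_1$ yields $g(\alpha) \leq g(1)$; the initial values $a_0=0,\,b_0=1$ are valid because Line~\ref{line:second_check} being skipped implies $g(0) > g(1)$. Writing $\tau_k \defi b_k - a_k = 2^{-k}$, the mean value theorem supplies some $\xi_k \in [a_k,b_k]$ with $g'(\xi_k) = (g(b_k)-g(a_k))/\tau_k \leq 2\delta/\tau_k$, and the $2LD^2$-smoothness of $g$ in $\alpha$ propagates this to $g'(\alpha) \leq 2\delta/\tau_k + 2LD^2\tau_k$ for every $\alpha \in [a_k, b_k]$; a one-sided Taylor expansion around $a_k$ controls $g(1) - g(\alpha) \leq O(\delta + LD^2 \tau_k^2)$ similarly. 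Balancing these at $\tau_k \sim \sqrt{\delta/(LD^2)}$, i.e., after $k = \Theta(\log(LD^2/\delta))$ iterations, reduces both bounds to $O(\sqrt{LD^2\delta})$, which together with $\delta_2$ and $c\delta$ is dominated by the chosen $\tildeepsilon$, so the exit condition must fire by this $k$-th iteration.

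The main technical obstacle will be this third step: propagating the invariant under noisy oracle observations and combining the mean-value estimate with the $2LD^2$-smoothness of $g$ to obtain a uniform upper bound on $g'$ over the shrinking interval, while tracking constants tightly enough that the final additive error matches $\sqrt{8LD^2\delta} + (9+5c)\delta$. Once the oracle implementations and error translations are pinned down, correctness is essentially algebraic, and the geometric halving of the interval is immediate from the binary-search update.
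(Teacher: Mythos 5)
Your setup is sound: the oracle implementations $h(\alpha)=\mtilde(v_\alpha)$, $\hat h(\alpha)=\innp{\gradmtilde(v_\alpha),y-z}$, the tolerances $\delta_1=\delta$ and $\delta_2\le\sqrt{8LD^2\delta}$, the smoothness of $g$ inherited from \cref{lemma:properties_of_prox_and_moreau_env_smoothness}, the identity $\alpha\hat h(\alpha)=\innp{\gradmtilde(x_t),x_t-z_{t-1}}$, and the treatment of the two early-return branches all agree with the paper, and the invariants you maintain on $g(a_k)$ and $g(b_k)$ are correct.

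The termination argument, however, has a genuine gap. Translating the noisy exit test back to $g$ consumes $\alpha\delta_2 + c\delta_1$, so the budget available for $\alpha g'(\alpha) - c\bigl(g(1)-g(\alpha)\bigr)$ is only $\tildeepsilon - c\delta_1 - \delta_2 = (9+4c)\delta$; the $\sqrt{8LD^2\delta}$ portion of $\tildeepsilon$ is already spent covering the gradient-oracle error $\delta_2$. Your mean-value/smoothness chain yields $g'(\alpha)\lesssim 2\delta/\tau_k + \hat L\tau_k$ with $\hat L\approx 2LD^2$, whose minimum over $\tau_k$ is $\Theta(\sqrt{LD^2\delta})$ — this is the best one can say about the slope \emph{at the midpoint} from the average slope on $[a_k,b_k]$, since $g$ can oscillate across the interval. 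But $\sqrt{LD^2\delta}\gg\delta$ for the inner tolerance $\delta=LD^2/(10T^6)$ used by the outer method, so you never bring the midpoint quantity under the $(9+4c)\delta$ threshold, and there is no iteration $k$ at which your bound certifies that the exit test fires. The paper's proof avoids this by a different mechanism: it uses the additional invariant $\hat h(b_k)>\tildeepsilon$ (hence $g'(b_k)>0$ and in fact $g'(b_k) > \tildeepsilon_g$) to establish, via two cases, the existence of a point $\alpha^\ast\in(a_k,b_k)$ with $g'(\alpha^\ast)=0$ and $g(\alpha^\ast)\le g(b_k)$. Around such a critical point, both $|g'|$ and the defect in $g(1)-g(\cdot)$ are controlled at scale $\hat L\Delta$ with $\Delta=8\delta_1/\hat L$, so a $\Delta/2$-neighborhood satisfies the true exit criterion at error $(9+4c)\delta_1$; once $b_k-a_k<\Delta$ such a neighborhood cannot fit, forcing termination within $O(\log(\hat L/\delta))$ iterations. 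The missing idea in your plan is precisely this third invariant and the resulting existence of a zero of $g'$ inside the active interval; without it, the MVT bound on the average slope cannot control the slope at the queried midpoint.
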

Since each $\delta$-approximation of a $\prox[\lambda f + \indicator{\X}]$ takes at most $\bigo{\log(\L\D^2 / \delta)}$ iterations for \PGD{}, Line \ref{line:call_to_bin_search} of \cref{alg:constrained_quasar_cvx_acceleration} can be implemented in $\bigotilde{1}$ first-order oracle queries. Using these tools, we obtain our near-optimal constrained accelerated method. The proof can be found in \cref{app:accelerated_proofs}.

\begin{theorem}\label{thm:acc_constrained_quasar_cvx}\linktoproof{thm:acc_constrained_quasar_cvx}
    Let $f:\Rd \to \R$ be $\L$-smooth and $\gamma$-quasar convex with center $x^\ast$ in a compact convex set $\X$ of diameter $\D$. \cref{alg:constrained_quasar_cvx_acceleration} obtains an $\epsilon$-minimizer in
    \[
        \bigotildel{\frac{1}{\gamma}\sqrt{\frac{\L\D^2}{\epsilon}}}
    \] 
    queries to a first-order oracle of $f$.
\end{theorem}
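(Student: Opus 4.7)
My plan is to interpret \cref{alg:constrained_quasar_cvx_acceleration} as an inexact linear-coupling-style accelerated method applied to the Moreau envelope $M \defi \M[\lambda f + \indicator{\X}]$, which by \cref{lemma:properties_of_prox_and_moreau_env_smoothness,lemma:quasar_convexity_of_moreau_env} is $2L$-smooth and $\gamma$-quasar-convex with center $x^\ast$. The three-sequence template is standard: $y_t$ plays the role of a gradient step on $M$ (realized as an inexact prox step on $f$), $z_t$ is a mirror/FTRL sequence controlling the regret of $\{\tilde\nabla M(x_i)\}$, and $x_t \in \conv{y_{t-1}, z_{t-1}}$ is the coupling point selected by the noisy line search. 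The target estimate after telescoping will be
\[
A_T\bigl(\mtilde(y_T) - M(x^\ast)\bigr) \;\leq\; \tfrac{1}{2}D^2 + (\text{errors}),
\]
which, using $f(\hat y_T)\leq \mtilde(y_T)$ and $M(x^\ast)\leq f(x^\ast)$ together with $A_T=\Theta(\gamma^2 T^2/L)$, yields the desired bound.

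I would first apply a standard FTRL regret bound to Line \ref{line:acc_ftrl}: since the regularizer $\tfrac12\|\cdot - x_0\|_2^2$ is $1$-strongly convex and $\norm{x^\ast-x_0}_2\leq\D$,
\[
\sum_{i=1}^{T}\tfrac{a_i}{\gamma}\innp{\gradmtilde(x_i),\, z_{i-1}-x^\ast}
\;\leq\; \tfrac{1}{2}D^2 + \sum_{i=1}^{T}\tfrac{a_i^2}{2\gamma^2}\norm{\gradmtilde(x_i)}_2^2.
\]
The stability term on the right is handled by \cref{lemma:approximate_descent_lemma}, which gives $\tfrac{1}{8L}\norm{\gradmtilde(x_i)}_2^2 \leq \mtilde(x_i) - \mtilde(y_i) + \delta$; the choice $a_i=\gamma^2 i/(8L)$ then makes $4L a_i^2/\gamma^2 \leq A_i$, so the stability sum is absorbed by $\sum_i A_i(\mtilde(x_i)-\mtilde(y_i)) + \sum_i A_i\delta$.

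Next, I would split $\innp{\gradmtilde(x_i), z_{i-1}-x^\ast} = \innp{\gradmtilde(x_i), x_i-x^\ast} - \innp{\gradmtilde(x_i), x_i-z_{i-1}}$. For the first piece, I would use $\gamma$-quasar-convexity of $M$ at $x^\ast$ combined with the bound $\|\gradmtilde(x_i)-\nabla M(x_i)\|_2 \leq 2\sqrt{2L\delta}$ (coming from $L$-strong convexity of the prox subproblem, \cref{lemma:properties_of_prox_and_moreau_env_smoothness}) and $\mtilde\geq M\geq \mtilde-\delta$, to obtain
\[
\tfrac{a_i}{\gamma}\innp{\gradmtilde(x_i), x_i-x^\ast} \;\geq\; a_i\bigl(\mtilde(x_i) - M(x^\ast)\bigr) - a_i\delta - \tfrac{a_i}{\gamma}\cdot 2D\sqrt{2L\delta}.
\]
For the second piece, the choice $c=A_{i-1}\gamma/a_i$ in Line \ref{line:call_to_bin_search} makes $\tfrac{a_i c}{\gamma}=A_{i-1}$, so \cref{lemma:binary_search} yields
\[
\tfrac{a_i}{\gamma}\innp{\gradmtilde(x_i), x_i-z_{i-1}} \;\leq\; A_{i-1}\bigl(\mtilde(y_{i-1})-\mtilde(x_i)\bigr) + \tfrac{a_i}{\gamma}\bigl(\sqrt{8LD^2\delta}+(9+5c)\delta\bigr).
\]

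Combining all three ingredients and summing, the algebraic core is a telescoping identity: since $A_i=A_{i-1}+a_i$,
\[
\sum_{i=1}^T a_i \mtilde(x_i) - \sum_{i=1}^T A_{i-1}\bigl(\mtilde(y_{i-1})-\mtilde(x_i)\bigr) - \sum_{i=1}^T A_i\bigl(\mtilde(x_i)-\mtilde(y_i)\bigr) \;=\; A_T \mtilde(y_T)
\]
(using $A_0=0$ and $y_0=x_0$). Thus everything collapses to $A_T(\mtilde(y_T)-M(x^\ast)) \leq \tfrac12 D^2 + A_T\delta + E_T$, where $E_T$ is the accumulation of the binary-search and inexact-gradient errors above. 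I would then plug in $\delta = LD^2/(10T^6)$ and $T=\tfrac{4}{\gamma}\sqrt{LD^2/\epsilon}$, use $c\leq T$, $A_T\leq \gamma^2 T^2/(8L)$, and $A_T\geq \gamma^2 T^2/(16L)$, to check that $D^2/(2A_T)\leq \epsilon/2$ and that each error contribution $E_T/A_T$ is $o(\epsilon)$. The bookkeeping is the main obstacle—especially verifying that the $\sqrt{LD^2\delta}$ term (which is linear, not quadratic, in $\sqrt\delta$) is killed by the factor $T^{-3}$ hidden in $\delta$, and that the $5c\delta$ term stays controlled despite $c$ growing linearly with $i$. Finally, since each outer iteration invokes \PGD{} and the binary search, both of which use $\bigotilde{1}$ first-order queries per call, the total query complexity is $T\cdot\bigotilde{1}=\bigotildel{\tfrac{1}{\gamma}\sqrt{LD^2/\epsilon}}$, as claimed.
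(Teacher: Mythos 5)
Your proposal is correct and follows essentially the same strategy as the paper: quasar-convexity and smoothness of the Moreau envelope (\cref{lemma:properties_of_prox_and_moreau_env_smoothness,lemma:quasar_convexity_of_moreau_env}), the approximate descent lemma, the noisy binary line search invoked with $c=A_{t-1}\gamma/a_t$, and the weight choice $a_t=\gamma^2 t/(8L)$ balancing FTRL stability against prox-step descent. The paper packages the regret argument inside a telescoping gap $G_t=\mtilde(y_t)-L_t$ and bounds $A_tG_t-A_{t-1}G_{t-1}$, whereas you state the FTRL regret bound explicitly and telescope the $a_i,A_{i-1},A_i$ coefficients directly; these are two bookkeepings of the same estimate.
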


\section{UNACCELERATED METHODS}

In this section, we present an analysis for the convergence of the projected gradient descent method as well as for the Frank-Wolfe algorithm, for $\L$-smooth $\gamma$-quasar convex objectives with constraints. 

Projected gradient descent minimizes a quadratic model of the function, which is an upper bound on the function, if $\eta \leq 1 / \L$. The update rule, starting from a point $x$, is the following: %

\begin{align} \label{eq:projected_gradient_descent_update}
     \begin{aligned}
         x^+ &{\gets} \argmin_{y\in\X}\left\{ f(x) + \innp{\nabla f(x), y - x} + \frac{1}{2\eta}\norm{y-x}_2^2 \right\}  \\
         &= \proj{\X}(x - \eta \nabla f(x)).
     \end{aligned}
\end{align}
We note that the problem defining $x^+$ is strongly convex and thus the solution is unique. Define the gradient mapping with respect to $f$ as
\[
    \newtarget{def:gradient_mapping}{\gradmap{\X}}(x) \defi \frac{1}{\eta}(x-x^+).
\] 
We do not make the dependence on $\eta$ explicit in the notation since it will be clear from context.

Our analysis works via making use of the gradient mapping and its properties, which we state in the next two lemmas. These are known facts about the gradient mapping \citep{nesterov2004introductory} but we provide proofs in \cref{app:steepest_descent} for completeness.

\begin{lemma}\label{eq:reduced_gradient_for_cvxty_ineq}\linktoproof{eq:reduced_gradient_for_cvxty_ineq}
    Let $f:\R^d\to\R$ be a differentiable function in a closed convex set $\X$. For any $x, y \in \X$, we have
    \[
    \innp{\nabla f(x),x^+-y} \leq \innp{\gradmap{\X}(x),x^+ - y}.
    \] 
\end{lemma}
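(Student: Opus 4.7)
The plan is to invoke the first-order optimality conditions for the strongly convex subproblem defining $x^+$. Since $x^+$ minimizes the quadratic model $y \mapsto f(x) + \innp{\nabla f(x), y-x} + \frac{1}{2\eta}\norm{y-x}_2^2$ over the closed convex set $\X$, its gradient at $x^+$, which is $\nabla f(x) + \frac{1}{\eta}(x^+ - x) = \nabla f(x) - \gradmap{\X}(x)$, must form a non-negative inner product with any feasible direction $y - x^+$ (for $y\in\X$):
\[
    \innp{\nabla f(x) - \gradmap{\X}(x),\, y - x^+} \geq 0.
\]

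Rearranging this inequality yields $\innp{\nabla f(x), x^+ - y} \leq \innp{\gradmap{\X}(x), x^+ - y}$, which is the desired conclusion.

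There is no real obstacle; the lemma is essentially a restatement of the variational inequality characterizing a projected/proximal step, and the only care needed is to verify the sign of the gradient of the quadratic model and that the constant term $f(x)$ and the linear piece $-\innp{\nabla f(x),x}$ do not enter the gradient with respect to $y$. The whole proof should fit in three or four lines.
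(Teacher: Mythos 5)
Your proof is correct, and it takes a slightly different route than the paper. You invoke the variational inequality characterizing $x^+$ as the minimizer of the (convex, differentiable) quadratic $Q_x$ over the closed convex set $\X$: $\innp{\nabla Q_x(x^+), y-x^+}\ge 0$ for all $y\in\X$, and read off the conclusion by computing $\nabla Q_x(x^+)=\nabla f(x)-\gradmap{\X}(x)$. The paper instead uses the $(1/\eta)$-strong-convexity growth inequality at the constrained minimizer, $Q_x(x^+)+\frac{1}{2\eta}\norm{x^+-y}_2^2\le Q_x(y)$, then expands both sides and applies the cosine rule $\norm{y-x}^2-\norm{x^+-x}^2-\norm{x^+-y}^2=2\innp{x-x^+,x^+-y}$ to reach the same inner-product bound. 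The two arguments are closely related (the strong-convexity inequality is itself derived from the variational inequality plus strong convexity), but yours is the more direct one, skipping the algebraic expansion; the paper's version has the small advantage of producing the three-point inequality as an intermediate step, which is sometimes reusable elsewhere. Both are standard and fully rigorous.
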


\begin{lemma}\label{eq:progress_by_reduced_gradient_norm}\linktoproof{eq:progress_by_reduced_gradient_norm}
    Let $\X$ be a closed convex set and let $f:\R^d\to\R$ be $\L$-smooth in $\X$. Starting at $x_t \in \X$, we compute $x_{t+1}$ from $x_t$ via \cref{eq:projected_gradient_descent_update} with $\eta = 1/\L$. Then
    \[
        f(x_{t+1}) - f(x_t) \leq - \frac{1}{2\L }\norm{\gradmap{\X}(x_t)}_2^2.
    \] 
\end{lemma}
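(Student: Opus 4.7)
The plan is to apply the standard $L$-smoothness descent inequality and then reduce the troublesome inner product $\innp{\nabla f(x_t), x_{t+1}-x_t}$ to one involving the gradient mapping, using \cref{eq:reduced_gradient_for_cvxty_ineq}. Since $f$ is $L$-smooth on $\X$ and both $x_t, x_{t+1}\in\X$, I would start from
\[
f(x_{t+1}) \leq f(x_t) + \innp{\nabla f(x_t), x_{t+1}-x_t} + \frac{L}{2}\norm{x_{t+1}-x_t}_2^2.
\]
By the definition of the gradient mapping, $x_{t+1}-x_t = -\eta\,\gradmap{\X}(x_t)$, so the quadratic term equals $\tfrac{L\eta^2}{2}\norm{\gradmap{\X}(x_t)}_2^2$.

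The key step is to bound the linear term. Applying \cref{eq:reduced_gradient_for_cvxty_ineq} with $x = y = x_t$ and $x^+ = x_{t+1}$ gives
\[
\innp{\nabla f(x_t), x_{t+1}-x_t} \leq \innp{\gradmap{\X}(x_t), x_{t+1}-x_t} = -\eta\,\norm{\gradmap{\X}(x_t)}_2^2.
\]
Plugging this in and taking $\eta = 1/L$ yields a linear contribution of $-\tfrac{1}{L}\norm{\gradmap{\X}(x_t)}_2^2$ and a quadratic contribution of $+\tfrac{1}{2L}\norm{\gradmap{\X}(x_t)}_2^2$, which sum to $-\tfrac{1}{2L}\norm{\gradmap{\X}(x_t)}_2^2$, giving the claim.

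There is no real obstacle here: \cref{eq:reduced_gradient_for_cvxty_ineq} already encapsulates the constrained-case subtlety that $\nabla f(x_t)$ and $\gradmap{\X}(x_t)$ need not coincide (they are not even generally parallel), and once that one-sided inequality is available the rest is the same algebra as in the usual unconstrained descent lemma. The only hypothesis being used beyond that lemma is $L$-smoothness on $\X$, which applies to the pair $x_t, x_{t+1}\in\X$ since the projection step keeps $x_{t+1}$ feasible.
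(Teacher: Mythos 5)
Your proof is correct and follows essentially the same route as the paper: apply the $L$-smoothness upper bound, invoke \cref{eq:reduced_gradient_for_cvxty_ineq} with $y=x_t$ to replace $\nabla f(x_t)$ by $\gradmap{\X}(x_t)$ in the linear term, and then use $x_{t+1}-x_t=-\eta\,\gradmap{\X}(x_t)$ with $\eta=1/L$ to combine the two terms. No discrepancies.
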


Now we present our result for quasar convex objectives. A technical innovation in both projected gradient descent and the Frank-Wolfe analyses is that we devise a lower bound estimate of $f(x^\ast)$ that can be obtained from the algorithms, that differs from classical Frank-Wolfe analyses or analyses of \PGD{} using the gradient mapping. This bound allows to shift certain weights by some indices to allow for errors coming from quasar convexity to be canceled. 

Surprisingly, the structure of this new lower bound is very similar for \PGD{}'s and Frank-Wolfe's analyses. See \cref{app:proofs_unacc_methods}.  %

\begin{theorem}[Projected Gradient Descent Rate]\label{thm:gd_for_quasar_convex}\linktoproof{thm:gd_for_quasar_convex}
    Let $f:\Rd \to \R$ be $\L$-smooth and $\gamma$-quasar convex with center $x^\ast$ in a compact convex set $\X$. Let $\D \defi \diam(\setsuch{x \in \X}{f(x) \leq f(x_0)})$ %
    Then, for all $t \geq 1$, the iterates of projected gradient descent \cref{eq:projected_gradient_descent_update} with step size $\eta = 1/\L$ satisfy
    \[
        f(x_t) - f(x^\ast) \leq \frac{20\L\D^2}{(t+1)\gamma^2}.
    \] 
\end{theorem}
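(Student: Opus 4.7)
The plan is to derive a one-step recursion of the form
\[
\gamma \Delta_t \;\leq\; 2\delta_t + D\sqrt{2L\delta_t},
\]
where $\Delta_t \defi f(x_t) - f(x^\ast)$ and $\delta_t \defi \Delta_t - \Delta_{t+1} \geq 0$, and then resolve it via a standard two-regime argument. The core idea is to trade the unrestricted gradient $\nabla f(x_t)$ appearing in the quasar-convexity inequality for the gradient mapping $\gradmap{\X}(x_t)$, whose norm is controlled by one-step function-value progress through \cref{eq:progress_by_reduced_gradient_norm}.

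Starting from $\gamma \Delta_t \leq \innp{\nabla f(x_t), x_t - x^\ast}$ (quasar-convexity at $x_t$), I would split the inner product by inserting the next iterate $x_{t+1}$: $\innp{\nabla f(x_t), x_t - x^\ast} = \innp{\nabla f(x_t), x_t - x_{t+1}} + \innp{\nabla f(x_t), x_{t+1} - x^\ast}$. The second summand is the ``shift'' to an index-$(t+1)$ point: \cref{eq:reduced_gradient_for_cvxty_ineq} with $y = x^\ast$ replaces $\nabla f(x_t)$ by $\gradmap{\X}(x_t)$, and Cauchy--Schwarz together with the sublevel-set diameter bound (both $x_{t+1}$ and $x^\ast$ lie in the sublevel set of $x_0$) yield $\innp{\gradmap{\X}(x_t), x_{t+1} - x^\ast} \leq \|\gradmap{\X}(x_t)\|_2 D \leq D\sqrt{2L\delta_t}$, where the last estimate invokes \cref{eq:progress_by_reduced_gradient_norm}. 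The first summand is controlled by smoothness: $\innp{\nabla f(x_t), x_t - x_{t+1}} \leq \delta_t + \tfrac{L}{2}\|x_t - x_{t+1}\|_2^2 = \delta_t + \tfrac{1}{2L}\|\gradmap{\X}(x_t)\|_2^2$, and a second application of \cref{eq:progress_by_reduced_gradient_norm} absorbs the quadratic term, giving an upper bound of $2\delta_t$. Adding both bounds produces the target recursion.

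To conclude I would use $2\delta_t + D\sqrt{2L\delta_t} \leq \max(4\delta_t,\, 2D\sqrt{2L\delta_t})$ to separate two regimes: in the geometric regime $\delta_t \geq \gamma\Delta_t/4$, so $\Delta_{t+1} \leq (1 - \gamma/4)\Delta_t$; in the sublinear regime $\delta_t \geq \gamma^2 \Delta_t^2 / (8LD^2)$, which together with monotonicity $\Delta_{t+1} \leq \Delta_t$ gives $1/\Delta_{t+1} - 1/\Delta_t \geq \gamma^2/(8LD^2)$. Telescoping the sublinear regime alone already yields $\Delta_t \leq 8LD^2/(t\gamma^2)$, and the geometric regime only speeds up convergence; a bookkeeping of constants across the two regimes produces the stated constant $20$.

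The main obstacle is precisely the asymmetry between the quasar-convexity inequality (stated in terms of $\nabla f(x_t)$) and the descent proxy (naturally stated in terms of $\gradmap{\X}(x_t)$): without convexity one has no direct bound on $\|\nabla f(x_t)\|_2$, and in particular $\Delta_0$ is \emph{not} a priori bounded by $LD^2$, since at a constrained minimizer one may have $\nabla f(x^\ast) \neq 0$. The shift to $x_{t+1}$ combined with \cref{eq:reduced_gradient_for_cvxty_ineq} and the gradient mapping's progress guarantee is exactly what makes the recursion self-contained; this is the mechanism by which the errors from the $1/\gamma$ factor in quasar-convexity are absorbed into the telescoping structure.
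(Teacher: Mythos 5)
Your one-step recursion $\gamma \Delta_t \leq 2\delta_t + D\sqrt{2L\delta_t}$ is correct and uses the right ingredients (\cref{eq:reduced_gradient_for_cvxty_ineq}, \cref{eq:progress_by_reduced_gradient_norm}, and the sublevel-set containment), and as a proof \emph{technique} it is genuinely different from the paper's, which constructs a weighted lower bound $L_t$ on $f(x^\ast)$ and telescopes a gap $A_tG_t - A_{t-1}G_{t-1}$. However, there is a gap in your resolution of the recursion, and it is precisely the one you flag yourself but do not actually close.

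The problem is that you apply quasar-convexity at $x_t$, which puts $\Delta_t$ (not $\Delta_{t+1}$) on the left-hand side. In the geometric regime, where $\delta_t \geq LD^2/2$ so that $\gamma\Delta_t \leq 4\delta_t$, you only obtain $\Delta_{t+1} \leq (1-\gamma/4)\Delta_t$. This is a \emph{multiplicative} decrease from $\Delta_0$, and since, as you correctly note, $\Delta_0$ has no a priori bound in terms of $L$, $D$, $\gamma$ (a constrained minimizer can have $\nabla f(x^\ast)\neq 0$, and $\|\nabla f(x_0)\|$ enters $\Delta_0$), the geometric regime does \textbf{not} ``only speed up convergence'': at $t=1$ the theorem requires $\Delta_1 \leq 10LD^2/\gamma^2$, but your recursion only gives $\Delta_1 \leq (1-\gamma/4)\Delta_0$, which can be arbitrarily large. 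Telescoping $1/\Delta_t$ in the sublinear regime from $1/\Delta_0 \geq 0$ only works if every step is in that regime; a prefix of geometric-regime steps whose length depends on $\Delta_0$ breaks the stated uniform bound.

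The paper's proof avoids this by applying quasar-convexity at $x_{i+1}$ (not $x_i$) and decomposing $\nabla f(x_{i+1}) = \nabla f(x_i) + (\nabla f(x_{i+1})-\nabla f(x_i))$. Then \cref{eq:reduced_gradient_for_cvxty_ineq} handles the first piece and Lipschitzness handles the second, and \emph{both} pieces are bounded by $\|\gradmap{\X}(x_i)\|D$. Since $\|\gradmap{\X}(x_i)\| = L\|x_i-x_{i+1}\| \leq LD$, this yields $G_0 \leq 2LD^2/\gamma$ and hence $\Delta_1 \leq 2LD^2/\gamma$, independently of $\Delta_0$, which is exactly the base case your argument is missing. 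A minimal fix to your route is to apply quasar-convexity at $x_{t+1}$ and decompose analogously, obtaining $\gamma\Delta_{t+1} \leq 2\|\gradmap{\X}(x_t)\|D \leq \min\{2LD^2,\, 2D\sqrt{2L\delta_t}\}$; the first bound gives $\Delta_1 \leq 2LD^2/\gamma$ and the second gives the recursion $\Delta_t - \Delta_{t+1} \geq \gamma^2\Delta_{t+1}^2/(8LD^2)$, which together yield the $O(LD^2/(t\gamma^2))$ rate by a standard induction.
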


Now we present our result on the Frank-Wolfe algorithm, whose pseudocode can be seen in \cref{app:frank_wolfe}. This algorithm operates by solving linear subproblems rather than projections, which are quadratic problems.
The algorithm is not modified with respect to the classical algorithm in convex optimization, but our analysis differs to deal with the quasar convex assumption.
\begin{theorem}[Frank-Wolfe Rate]\label{thm:fw_quasar_analysis}\linktoproof{thm:fw_quasar_analysis}
    Let $f:\Rd \to \R$ be $\L$-smooth and $\gamma$-quasar convex with center $x^\ast$ in a compact convex set $\X$ of diameter $\D$. Then, for all $t \geq 1$, the iterates of the Frank-Wolfe \cref{alg:frank_wolfe} with both step sizes alternatives satisfy
    \[
        f(x_t) - f(x^\ast) \leq \frac{6\L\D^2}{(t+1)\gamma^2}.
    \] 
\end{theorem}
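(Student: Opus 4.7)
My plan is to combine $L$-smoothness with the $\gamma$-quasar-convexity inequality applied at each iterate, and then telescope a weighted potential so that the $1/\gamma$ losses arising from quasar convexity combine into the $1/\gamma^2$ factor of the final rate.

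First, I would use $L$-smoothness together with the update $x_{t+1} = x_t + \eta_t(v_t - x_t)$ and the diameter bound $\|v_t - x_t\| \leq D$ to obtain the standard Frank-Wolfe descent inequality
\[
f(x_{t+1}) \leq f(x_t) - \eta_t g_t + \frac{L\eta_t^2 D^2}{2}, \qquad g_t \defi \innp{\nabla f(x_t), x_t - v_t} \geq 0.
\]
Second, since $v_t$ is a linear minimizer over $\X$, $\innp{\nabla f(x_t), v_t - x_t} \leq \innp{\nabla f(x_t), x^\ast - x_t}$, and combining with $\gamma$-quasar convexity at $x_t$ I would derive the ``lower bound estimate'' of $f(x^\ast)$ that the paper emphasizes:
\[
f(x^\ast) \geq f(x_t) - \frac{g_t}{\gamma}, \qquad \text{equivalently,}\qquad g_t \geq \gamma h_t,
\]
with $h_t \defi f(x_t) - f(x^\ast)\geq 0$. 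This is the natural quasar-convex replacement for the classical convex Frank-Wolfe bound $f(x^\ast)\geq f(x_t)-g_t$, and it is the only place where quasar convexity enters.

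Third, plugging the bound $g_t \geq \gamma h_t$ into the descent inequality gives the contraction
\[
h_{t+1} \leq (1 - \eta_t \gamma)\, h_t + \frac{L\eta_t^2 D^2}{2}.
\]
I would pick a step size of the form $\eta_t = \min\!\bigl(1,\, \tfrac{2}{\gamma(t+2)}\bigr)$, introduce weights $a_t = t+1$ with $A_t = \sum_{k\le t}a_k = (t+1)(t+2)/2$, and observe that $A_{t+1}(1-\eta_t\gamma) \leq A_t$ precisely when $\eta_t\gamma \geq 2/(t+2)$. Multiplying the recurrence by $A_{t+1}$ then yields
\[
A_{t+1}h_{t+1} - A_t h_t \;\leq\; A_{t+1}\cdot \frac{L\eta_t^2 D^2}{2} \;\leq\; \frac{2LD^2}{\gamma^2}.
\]
Telescoping from $0$ to $t$ and dividing by $A_t$ produces $h_t \leq C\, LD^2/((t+1)\gamma^2)$ for a small absolute constant $C$. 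This is where the ``shift certain weights by some indices'' trick mentioned in the introduction comes in: pairing the lower bound $f(x^\ast)\geq f(x_t) - g_t/\gamma$ (at time $t$) with $h_{t+1}$ (at time $t+1$) is exactly what allows the factor $\eta_t\gamma$ in the contraction to cancel the ratio $A_t/A_{t+1}$.

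The main obstacle will be the warm-up regime where $2/(\gamma(t+2))>1$ and the step is capped at $\eta_t = 1$, since then the recurrence becomes $h_{t+1}\leq (1-\gamma)h_t + LD^2/2$, which by itself does not yield the $1/t$ rate. I would handle this by first showing that after at most $O(1/\gamma)$ initial steps, $h_t$ enters the regime $h_t = O(LD^2/\gamma^2)$ (either by the geometric contraction, or directly by using the lower bound $f(x^\ast)\geq f(x_t)-g_t/\gamma$ together with the smoothness upper bound to bound $h_t$ from the outset), and then launching the weighted induction above from that point. Tightening all constants carefully is what gives the factor $6$ in the stated bound $h_t \leq 6LD^2/((t+1)\gamma^2)$.
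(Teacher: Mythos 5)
Your approach is a genuinely different route from the paper's. The paper does not derive a one-step contraction $h_{t+1}\le(1-\eta_t\gamma)h_t+\tfrac{L\eta_t^2\D^2}{2}$; instead it constructs an explicit lower bound $L_t\le f(x^\ast)$ by applying quasar convexity at the \emph{shifted} iterates $x_{i+1}$ while using the gradient $\nabla f(x_i)$ (the mismatch is absorbed by an $L\norm{x_{i+1}-x_i}\D$ term from Lipschitzness), and then controls the potential $G_t=f(x_{t+1})-L_t$ via $A_tG_t-A_{t-1}G_{t-1}\le E_t$. Because $L_t$ contains the terms $\tfrac{1}{A_t}\sum a_i f(x_{i+1})$, the $f$-values cancel in $A_tG_t-A_{t-1}G_{t-1}$ already at $t=0$ (since $A_{-1}=0$), so the base case is bounded by $O(L\D^2/\gamma^2)$ with no reference to $h_0$; this is precisely the role of the ``weight shift'' the introduction alludes to. Your analysis, with potential $A_t h_t$ where $h_t=f(x_t)-f(x^\ast)$, does not enjoy that cancellation and must face $h_0$ head-on.

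That is where there is a real gap. You correctly identify the warm-up regime as the obstacle, but neither of the two fixes you sketch closes it. The geometric-contraction route gives $h_t\le(1-\gamma)^th_0+\tfrac{L\D^2}{2\gamma}$, and $h_0$ is \emph{not} bounded by $O(L\D^2)$ in general for a smooth quasar-convex function on a compact set (first-order optimality at $x^\ast$ gives $\innp{\nabla f(x^\ast),x-x^\ast}\ge 0$, not a bound on $\norm{\nabla f(x^\ast)}$). The ``lower bound plus smoothness'' route as written yields, with $\eta_t=1$ and $g_t\defi\innp{\nabla f(x_t),x_t-v_t}$, the inequality $h_{t+1}\le\bigl(\tfrac{1}{\gamma}-1\bigr)g_t+\tfrac{L\D^2}{2}$, and $g_t$ is also unbounded a priori. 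The argument that actually works is to drop quasar convexity in the warm-up and apply $L$-smoothness at $x_t$ \emph{twice}, once toward $x_{t+1}=v_t$ and once toward $x^\ast$:
\[
h_{t+1}\le h_t - g_t+\tfrac{L\D^2}{2},\qquad h_t\le g_t+\tfrac{L\D^2}{2},
\]
whose sum gives $h_{t+1}\le L\D^2$ for every warm-up step. This unconditional bound is exactly what the paper's shifted potential produces for free. Once you have it, your weighted telescoping from the end of warm-up (with $A_{t_0}\approx 2/\gamma^2$) does give $O(L\D^2/(\gamma^2 t))$, so the plan is salvageable, but the missing lemma is essential and should be stated.

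Two smaller remarks for alignment with the paper. First, the $\gamma$-dependent mixing you use ($\eta_t\approx 2/(\gamma(t+2))$, capped at $1$) matches the form the paper's proof itself works with ($\eta_t=\tfrac{a_t/\gamma}{A_{t-1}+a_t/\gamma}$, which is automatically in $(0,1]$ and avoids the cap), even though \cref{alg:frank_wolfe} as written uses $\eta_t=\tfrac{2}{t+2}$; so your need for $\gamma$ in the step size is not an extra liability relative to what is actually proved. Second, your derivation that quasar convexity gives $g_t\ge\gamma h_t$ is the cleaner replacement for the paper's shifted lower bound, and for the non--warm-up iterations it is equivalent; the reason the paper adopts the more convoluted $L_t$ is entirely to make $t=0$ work.
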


\begin{remark}
An interesting property of the result in \cref{thm:gd_for_quasar_convex,thm:fw_quasar_analysis} is that the algorithms can achieve their convergence rates without requiring any knowledge of the parameter $\gamma$. The fact that they adapt automatically to the unknown $\gamma$ highlights their robustness and makes the results easily applicable in practical machine learning settings.
\end{remark}

\section{CONCLUSION}

In this work, we addressed the open question \citep{martinezrubio2022global,lezane2024accelerated} of whether smooth quasar-convex functions can be efficiently optimized in the presence of general convex constraints. 

Prior work established accelerated rates in the unconstrained case, but these solutions did not work for the constrained case due to the loss of one degree of freedom in the algorithm's design.  Our main contribution consisted of closing this gap: we presented an accelerated first-order method that achieves an $\epsilon$-approximate solution in $\bigotildel{\frac{1}{\gamma}\sqrt{\frac{\L\D^2}{\epsilon}}}$ first-order oracle queries, and we further extended the scope of quasar-convex optimization beyond unconstrained domains analyzing unaccelerated algorithms. As a result of our accelerated method, we improved over prior Riemannian optimization algorithms via a reduction to quasar-convex optimization \citep{martinezrubio2022global}. 

Our results provide a new tools for analyzing and understanding algorithms in structured nonconvex optimization, which may shed further light on why simple gradient-based algorithms are so effective in large-scale machine learning.

\subsection*{Acknowledgements}

Project supported by a 2025 Leonardo Grant for Scientific Research and Cultural Creation from the BBVA Foundation.

\clearpage
\onecolumn

\printbibliography[heading=bibintoc] %

\clearpage
\appendix
\thispagestyle{empty}

%
\aistatstitle{Fast Quasar-Convex Optimization with Constraints: \\
Supplementary Materials}

\section{PROPERTIES}

\begin{proof}\linkofproof{lemma:properties_of_prox_and_moreau_env_smoothness}
    Since $f$ has $\L$-Lipschitz gradients, we have
    \[
        f(y) \geq f(x) + \innp{\nabla f(x), y-x} - \frac{\L}{2}\norm{x-y}_2^2 \text{ for all } x, y \in \X.
    \] 
    Thus, adding the $2\L $ strong convexity inequality for $\phi_z(y) = \L\norm{x-y}_2^2$ for any $z \in \Rd$, we obtain:
    \[
        (f+\phi_z)(y) \geq (f+\phi_z)(x) + \innp{\nabla (f+\phi_z)(x), y-x} + \frac{\L}{2}\norm{x-y}_2^2 \text{ for all } x, y \in \X.
    \] 
    So $f+\phi_z$ is $\L$-strongly convex. A similar argument shows that it is $3\L $-smooth, since $\phi_z$ has $2\L $-Lipschitz gradients.

    Finally, for $\lambda =1/(2\L )$, denote $M \defi \M[\lambda f + \indicator{\X}]$ for ease of notation.
   Note that since the proximal subproblem is strongly convex, we have by the first-order optimality condition that 
   \[
       \nabla f(\prox(x)) + \gradmap{\prox(x)} + 2\L  (\prox(x)-x) = 0,
   \] 
    where $\prox(x) \defi \argmin_{y\in\X} \set{f(y) + \frac{1}{2\lambda}\norm{y-x}_2^2}$ and $\gradmap{\prox(x)} \in \partial \indicator{\X}(\prox(x))$. Using this equation in $\circled{2}$ below and by the envelope theorem in $\circled{1}$, we have
    \begin{equation}\label{eq:grad_of_moreau_env}
        \nabla \M(x) \circled{1}[=] \frac{1}{\lambda}(x-\prox(x)) \circled{2}[=] \nabla f(\prox(x)) + \gradmap{\prox(x)}.
    \end{equation}

    Obtaining the smoothness of the Moreau envelope is standard: the proof does not rely on the convexity of $f$. Indeed, by definition, $\frac{1}{\lambda}$-smoothness means that the isotropic quadratic with leading coefficient $\frac{1}{2\lambda}$ whose zeroth- and first-order information coincide with that one of $\M$ at $x$, is above $\M$. The quadratic we are looking for takes the form $Q_y: \R^d \to \R, x\mapsto f(y)+\indicator{\X}(y) + \frac{1}{2\lambda}\norm{x-y}_2^2$ for $y=\prox(x)$, for which indeed $\M(x) = Q_y(x)$ and $\nabla \M(x) = \nabla Q_y(x)$. Moreover, for any $y$, it is $\M \leq Q_y$ by the definition of $\M$ as a $\min$. Alternatively, we can provide this argument in an algebraic form:
    \begin{align}
     \begin{aligned}
         \M(y) &\circled{1}[=] f(\prox(y))+\indicator{X}(\prox(y)) + \frac{1}{2\lambda}\norm{\prox(y)-y}_2^2 \\
         &\circled{2}[\leq] f(\prox(x)) +\indicator{X}(\prox(y)) + \frac{1}{2\lambda}\norm{\prox(x)-y}_2^2 \\
         & \circled{3}[=] \M(x) - \frac{1}{2\lambda} \norm{\prox(x)-x}_2^2 + \frac{1}{2\lambda}\norm{\prox(x)-y}_2^2 \\
         &\circled{4}[=] \M(x) - \frac{1}{\lambda}\innp{\prox(x)-x, y-x} + \frac{1}{2\lambda}\norm{y-x}_2^2 \\
         &\circled{5}[=] \M(x) + \innp{\nabla \M(x), y-x} + \frac{1}{2\lambda} \norm{x-y}_2^2,
     \end{aligned}
    \end{align}
        where $\circled{1}$ and $\circled{3}$ use the definition of the Moreau envelope and the prox, $\circled{2}$ holds by the optimality of $\prox(y)$, $\circled{4}$ is the cosine theorem and $\circled{5}$ uses the expression of $\nabla \M(x)$ the gradient that we computed in \cref{eq:grad_of_moreau_env}. 
    
\end{proof}

\begin{proof}\linkofproof{lemma:quasar_convexity_of_moreau_env}
    Recall that we have the following, for $\lambda = 1 / (2\L )$, cf. \cref{eq:grad_of_moreau_env}:
    \[
        \nabla \M(x) = 2\L (x-\prox(x)) = \nabla f(\prox(x)) + \gradmap{\prox(x)}.
    \] 
    Note that we can use the reasoning leading to \cref{eq:grad_of_moreau_env} since $f$ is assumed to be $\L$-smooth. 

    Using quasar convexity of $f$ and $\indicator{\X}$ (convexity implies $\gamma$-quasar convexity centered at a minimizer, $\gamma \in (0, 1]$, and for the convex $\indicator{\X}$, every point in $\X$ is a minimizer), we obtain:
    \begin{align} \label{eq:quasar_cvxty_of_M}
     \begin{aligned}
         \M(x^\ast) &= f(x^\ast) + \indicator{\X}(x^\ast) \\
         &\geq f(\prox(x)) + \frac{1}{\gamma}\innp{\nabla f(\prox(x)), x^\ast - \prox(x)}  + \indicator{\X}(\prox(x)) + \frac{1}{\gamma}\innp{\gradmap{\prox(x)}, x^\ast - \prox(x)} \\
         &= \left(\M(x) - \L\norm{x-\prox(x)}_2^2\right) + \frac{1}{\gamma}\innp{\nabla \M(x), x^\ast - x} + \frac{1}{\gamma}\innp{\nabla \M(x), x - \prox(x)}\\
         &= \M(x) + \frac{1}{\gamma}\innp{\nabla \M(x), x^\ast - x} + \L\left(\frac{2}{\gamma}-1\right) \norm{x-\prox(x)}_2^2\\
         &\geq  \M(x) + \frac{1}{\gamma}\innp{\nabla \M(x), x^\ast - x},
       \end{aligned}
    \end{align}
    where we used $\M(x^\ast) = f(x^\ast) = f(x^\ast) + \indicator{\X}(x^\ast)$ which holds since $\indicator{\X}\equiv 0$ in $\X$ and
    \[
        f(x^\ast) \circled{1}[\leq] f(\prox(x^\ast)) + \frac{1}{2\lambda}\norm{x^\ast - \prox(x^\ast)}_2^2 \circled{2}[=] \M(x^\ast) = \min_{y\in\X}\{f(y) +\frac{1}{2\lambda}\norm{x^\ast-y}_2^2 \} \circled{3}[\leq] f(x^\ast),
    \] 
    where $\circled{1}$ uses $x^\ast$ is a minimizer of $f$, $\circled{2}$ holds by definition of $\prox(\cdot)$, and $\circled{3}$ holds by substituting $y$ by $x^\ast$. Note that the inequality above also shows that $\prox(x^\ast) = x^\ast$ since the inequalities become equalities.
\end{proof}

\begin{lemma}\label{lemma:stopping_criteria_of_pgd_subroutine_for_prox}
    Let $F$ be an $\L$-strongly convex function with minimizer at $y^\ast$, that is also $3\L $-smooth as in our \cref{lemma:properties_of_prox_and_moreau_env_smoothness}. Then, if we have a point $y$ such that $\norm{\nabla F(y)}_2 \leq \sqrt{2\L \delta}$, then $F(y)-F(y^\ast) \leq \delta$. \PGD{} starting at distance at most $\D$ from $y^\ast$ obtains such a point in at most $\bigo{\log(\L\D^2 / \delta)}$ iterations.
\end{lemma}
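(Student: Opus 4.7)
The plan is to separate the claim into two independent parts: first, a deterministic PL-type conversion from the gradient-norm hypothesis into $\delta$-optimality; second, an iteration count for \PGD{} producing a point satisfying that gradient-norm bound.

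For the first part, I would invoke the standard PL-type inequality implied by $L$-strong convexity. Starting from the strong-convexity bound at $y$ with reference $y^\ast$,
\[
F(y^\ast) \ge F(y) + \innp{\nabla F(y),\, y^\ast - y} + \tfrac{L}{2}\|y^\ast - y\|_2^2,
\]
rearranging, applying Cauchy--Schwarz to $\innp{\nabla F(y), y-y^\ast}$, and maximizing the resulting concave quadratic in $\|y-y^\ast\|_2$ yields $F(y) - F(y^\ast) \le \|\nabla F(y)\|_2^2/(2L)$. Substituting the hypothesis $\|\nabla F(y)\|_2^2 \le 2L\delta$ then gives the claimed $\delta$-optimality, and this step does not rely on whether the problem is constrained.

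For the second part, by \cref{lemma:properties_of_prox_and_moreau_env_smoothness} the function $F$ is $L$-strongly convex and $3L$-smooth, hence well-conditioned with condition number $3$. The standard linear convergence of \PGD{} with step $1/(3L)$ (see \citep[Section 3.4.2]{bubeck15convexmonograph}, already cited in the text) then gives geometric decay of the squared iterate distance $\|y_t - y^\ast\|_2^2 \le (2/3)^t \|y_0 - y^\ast\|_2^2 \le (2/3)^t D^2$. Combining this with the $3L$-Lipschitzness of $\nabla F$ yields $\|\nabla F(y_t) - \nabla F(y^\ast)\|_2 \le 3L\|y_t - y^\ast\|_2 \le 3LD\,(2/3)^{t/2}$, so the gradient-norm criterion is reached after $O(\log(LD^2/\delta))$ iterations.

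The main obstacle I anticipate is in that last line: it silently uses $\nabla F(y^\ast) = 0$, which holds in the unconstrained case but can fail at a boundary minimizer of $F$ on $\X$, in which case the target $\|\nabla F(y)\|_2 \le \sqrt{2L\delta}$ may be unachievable. I would resolve this either by restating the stopping test in terms of the gradient mapping $\gradmap{\X}$, which vanishes at constrained minimizers and obeys analogous smoothness and PL-type inequalities, or by exploiting that in the paper's actual use the subproblem is centered at an $x\in\X$ with the strong quadratic regularizer $L\|y-x\|_2^2$, which keeps $\prox(x)$ close to $x$ so that for typical configurations $y^\ast$ lies in the relative interior of $\X$ and $\nabla F(y^\ast)=0$.
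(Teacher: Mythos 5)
Your first part is essentially the paper's: the PL-type bound $F(y)-F(y^\ast)\le\frac{1}{2L}\|\nabla F(y)\|_2^2$ from $L$-strong convexity converts the gradient-norm test directly into $\delta$-optimality, and, as you observe, this direction is insensitive to constraints because it minimizes the strong-convexity quadratic over all of $\R^d$. Your second part, though, takes a different route than the paper. The paper uses the complementary smoothness lower bound $\frac{1}{2\tilde L}\|\nabla F(y)\|_2^2 \le F(y)-F(y^\ast)$ with $\tilde L=3L$ to argue that any $\delta/3$-minimizer in function value automatically satisfies $\|\nabla F(y)\|_2\le\sqrt{2L\delta}$, then plugs that into the function-value linear rate of \PGD{}. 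You instead run the argument through iterate-distance linear decay followed by $\|\nabla F(y_t)-\nabla F(y^\ast)\|_2\le 3L\|y_t-y^\ast\|_2$. Both routes are interchangeable for this $O(1)$-conditioned problem and yield the same $\bigo{\log(LD^2/\delta)}$ iteration count.

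The caveat you raise is genuine, and it applies equally to the paper's argument: the smoothness lower bound the paper relies on also implicitly uses $\nabla F(y^\ast)=0$, since deriving it requires $F(y-\tfrac{1}{\tilde L}\nabla F(y))\ge F(y^\ast)$, which need not hold when $y-\tfrac{1}{\tilde L}\nabla F(y)\notin\X$ and $y^\ast$ is a constrained boundary minimizer. In that case the raw gradient norm does not shrink below a constant and the stated stopping test never fires. Of your two suggested repairs, only the first is a proof: replacing $\|\nabla F\|_2$ by the gradient-mapping norm $\|\gradmap{\X}\|_2$, which vanishes at the constrained minimizer and obeys the descent and PL-style inequalities already established in \cref{eq:reduced_gradient_for_cvxty_ineq,eq:progress_by_reduced_gradient_norm}, does the job. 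The ``typical configurations'' argument does not: the $L\|y-x\|_2^2$ regularizer keeps $\prox(x)$ near $x$, but nothing prevents $\prox(x)$ from landing on $\partial\X$. It is worth noting that even without a firing stopping test, the iteration-count claim — which is all the main theorem actually consumes — remains valid, since running \PGD{} for a fixed $\bigo{\log(LD^2/\delta)}$ iterations already delivers the $\delta$-minimizer.
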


\begin{proof}
    The following holds for a function $F$ with minimizer at $y^\ast$ that is $\newtarget{def:some_other_str_convexity_constant}{\mutilde}$-strongly convex and $\newtarget{def:some_other_smoothness_constant}{\Ltilde}$ smooth:
    \begin{equation}\label{eq:function_value_vs_grad_in_str_cvx_and_smooth}
        \frac{1}{2\Ltilde} \norm{\nabla F(y)}_2^2 \leq F(y)- F(y^\ast) \leq \frac{1}{2\mutilde}\norm{\nabla F(y)}_2^2. 
    \end{equation}
    For the function we consider, it is $\mutilde \gets \L$ and $\Ltilde \gets 3\L $. Thus, if we detect that $y$ satisfies $\norm{\nabla F(y)}_2 \leq \sqrt{2\L  \delta}$, then we have $F(y) - F(y^\ast) \leq \delta$. At the same time, we have this gradient norm guarantee whenever $F(y)-F(y^\ast) \leq \delta / 3$, by the first inequality above.

    Since \PGD{} starting at $y_0$ takes at most $\bigo{\frac{\Ltilde}{\mutilde}\log(\frac{\Ltilde\norm{y_0-y^\ast}_2^2}{\hat{\delta}})}$ iterations to obtain a $\hat{\delta}$-minimizer, cf. \citep[Section 3.4.2]{bubeck15convexmonograph} for instance, then for our function and $\hat{\delta} = \delta / 3$, we take $\bigo{\log\frac{\L\D^2}{\delta}}$ iterations to find a gradient with norm at most $\sqrt{2\L\delta}$ guaranteeing a function gap of at most $\delta$.
\end{proof}

\begin{example}\label{ex:quasar_convex_function}
We consider $f(x) = (x^2 + \tfrac18)^{1/6}$ with center $x^\ast = 0$ and show that it is $\frac{1}{2}$-quasar convex in $[-5, 5]$. Such a property means:
\[
    (x^2 + \frac{1}{8})^{1/6} - \frac{1}{8^{1/6}} = f(x) - f(x^\ast) \leq 2 x f'(x) = \frac{2}{3}(x^2 + \frac{1}{8})^{-5 / 6}x^2, \qquad \text{ for } x \in [-5, 5].
\]

Showing the inequality above is simple, by a change of variable $t = (x^2 + \frac{1}{8})^{1/6}$ (yielding $x^2 = t^6 - \frac{1}{8}$) gives that it is enough to show, for $t \in I \defi [\frac{1}{8^{1 /6}}, (25 + \frac{1}{8})^{ 1 / 6}]$, that the minimum of the following function is at least $0$:
\[
    g(t) \defi \frac{2}{3} t^{-5}(t^6 - \frac{1}{8}) - t + \frac{1}{8^{1 / 6}} \stackrel{?}{\geq} 0 \text{ for all } t \in I.
\] 
    This function is concave since $g''(t) = -\frac{5}{2}t^{-7} \leq 0$ and $I \subset \R_{> 0}$, and so it is enough to check the endpoints of the interval to look for the minimum, which indeed is nonnegative.

    It is direct to check that $g(x) \defi f(x) + \frac{1}{2\lambda}(x-x_0)^2$ for $\lambda = 50$ and a $x_0 \approx -12.23$, there is a local maximizer of $g$ at $x=-2$, by computing the value of $x_0$ that makes the derivative of $g$ be $0$ at $x = -2$. In that case $g''(-2) \leq 0$.
\end{example}

\section{ACCELERATED CONSTRAINED METHOD FOR SMOOTH QUASAR-CONVEX OPTIMIZATION}\label{app:accelerated_proofs}

Here we present the proof of our main theorem, the proofs of \cref{lemma:binary_search,lemma:approximate_descent_lemma}, that we use for the theorem, follow.

\begin{proof}\linkofproof{thm:acc_constrained_quasar_cvx}
    Throughout this proof, we use the value $\lambda = 1 /(2\L )$ and denote $\M(\cdot) \defi \M[\lambda f + \indicator{\X}](\cdot)$ and $\prox[](\cdot)$ its corresponding proximal operator, without making explicit the dependence on $\lambda$, $f$ and $\X$. Denote the diameter of the feasible set $\X$ by $\D$.

Denote $F_{k}(y) \defi f(y) + \frac{1}{2\lambda} \norm{y-x_k}_2^2$.
For $\delta > 0$ to be chosen later, assume that we optimize $F_k$ so that $\norm{y_k - y_k^\ast}_2 \leq \delta_1$, and so $F_k(y_k)- \delta_2 \leq F_k(y_k^\ast) = \M(x_k) \leq F_k(y_k)$, where $y_k^\ast = \argmin_y F_k(y) = \prox(x_k)$ for $\delta_2 = \delta$. Since the condition number of $F_k(y_k)$ is $\bigo{1}$, we can do so with \PGD{} in $\bigo{\log(\frac{\L\D^2}{\delta})}$ iterations, cf. \cref{lemma:stopping_criteria_of_pgd_subroutine_for_prox}.

We denote our approximation of $\nabla \M(x_k)$ by $\gradmtilde(x_k) \defi \frac{1}{\lambda} (x_k - y_k)$, as in Line \ref{line:gradmtilde} of \cref{alg:constrained_quasar_cvx_acceleration}. Also denote by $\mtilde(x_k) \defi f(y_k) + \frac{1}{2\lambda}\norm{y_k - x_k}_2^2$ the approximation of the Moreau envelope value.
Using the approximate optimality condition of $y_k$ and optimality of $y_k^\ast$, that is, $F_k(y_k) - F_k(y_k^\ast) \leq \delta$, we obtain
\begin{equation}\label{eq:approx_moreau_envelope_value}
    \mtilde(x_k)-\delta = F_k(y_k)-\delta \leq F_k(y_k^\ast) = \M(x_k) \leq F_k(y_k)
\end{equation}
\begin{equation}\label{eq:approx_gradient_innacuracy}
    \norm{\gradmtilde(x_k) - \nabla \M(x_k)}_2 \leq \frac{1}{\lambda}\norm{y_k - y_k^\ast}_2 \circled{1}[\leq] \sqrt{8\L \delta},
\end{equation}
where in $\circled{1}$ we that $F_k$ is $\L$-strongly convex, cf. \cref{lemma:properties_of_prox_and_moreau_env_smoothness}, and so $\frac{\L}{2\lambda^2}\norm{y_k -y_k^\ast}_2^2 \leq \frac{1}{\lambda^2}(F_k(y_k)- F_k(y_k^\ast)) \leq 4\L ^2\delta$.

    Let $a_t > 0$ for $t \geq 1$, whose value will be decided later, and $A_t \defi A_{t-1} + a_t = \sum_{i=1}^t a_i$. Denote $A_{0} = 0$ and $A_0\L _0 = 0$. We first build the following lower bound $L_t$ on $f(x^\ast) = \M(x^\ast)$, for $t \geq 1$:
\begin{align*}
     \begin{aligned}
         A_t \M(x^\ast) &\circled{1}[\geq] \sum_{i=1}^t a_i \M(x_i) + \frac{a_i}{\gamma}\innp{\nabla \M(x_i), x^\ast - x_i} \\
             & \circled{2}[\geq] (-A_t \delta + \sum_{i=1}^t a_i \mtilde(x_i)) + \sum_{i=1}^t \frac{a_i}{\gamma}\innp{\gradmtilde(x_i), x^\ast - x_i} \pm \frac{1}{2}\norm{x^\ast - x_0}_2^2 -\sqrt{8\L \delta}\frac{A_tD}{\gamma}  \\
             & \circled{3}[\geq] (-A_t \delta + \sum_{i=1}^t a_i \mtilde(x_i)) + \sum_{i=1}^t \frac{a_i}{\gamma}\innp{\gradmtilde(x_i), z_t - x_i} + \frac{1}{2}\norm{z_t - x_0}_2^2 \\
             & \quad - \frac{1}{2}\norm{x^\ast -x_0}_2^2 -\sqrt{8\L \delta}\frac{A_tD}{\gamma}  \\
             & \defi A_t L_t.
     \end{aligned}
\end{align*}
where above, $\circled{1}$ holds by the $\gamma$-quasar convexity of $M$. In $\circled{2}$, we used \cref{eq:approx_moreau_envelope_value}, and we also added and subtracted our approximate gradients, and used Cauchy-Schwartz along with \cref{eq:approx_gradient_innacuracy} and $x^\ast, x_i \in \X$ to obtain 
\[
    - \sum_{i=1}^t \frac{a_i}{\gamma} \innp{\nabla \M(x_i) - \gradmtilde(x_i), x^\ast - x_i} \geq -\sqrt{8 \L\delta}\frac{A_tD}{\gamma}.
\] 
We also added and subtracted half the initial squared distance to $x^\ast$ to use it in $\circled{3}$, where we bound some $x^\ast$ away by using the definition of $z_t$.
    Now, we define the gap $G_t \defi \mtilde(y_t) - L_t$. For $t \geq 1$, we are going to bound 
    \begin{equation}\label{eq:desideratum_one_step_acc}
      A_t G_t - A_{t-1}G_{t-1} - \eventindicator{t=1}\frac{1}{2}\norm{x_0-x^\ast}_2^2 \leq E_t,  
    \end{equation}
    for some value $E_t$, where $\newtarget{def:event_indicator_function}{\eventindicator{A}}$ denotes the event indicator that is $1$ if $A$ holds true and $0$ otherwise.  In the sequel, we use the notation $\mtilde(y_t) \defi f(\hat{y}_t) + \frac{1}{2\lambda}\norm{\hat{y}_t-y_t}_2^2$ for $\hat{y}_t \in \argmindelta[y] \{ f(y) + \frac{1}{2\lambda}\norm{y-y_t}_2^2 \}$. For the corresponding $\hat{y}_T$, and concatenating \cref{eq:desideratum_one_step_acc}, we obtain:
\begin{equation}\label{eq:finishing_adgt_argument}
    f(\hat{y}_T) - f(x^\ast)  \leq \mtilde(y_T) - \M(x^\ast) \leq G_T \leq \frac{1}{A_T}\left( \frac{1}{2}\norm{x^\ast-x_0}_2^2 + \sum_{i=1}^T E_i \right).
\end{equation}

Thus, we want $A_t$ to grow rapidly whereas $E_t$ should be moderate. We note that the extra term at $t=1$ in \cref{eq:desideratum_one_step_acc} is due to the lower bound $A_1\L _1$, which is not canceled by $A_0 L_0 \defi 0$, whereas for other values of $t$ this same term appears in both lower bounds $A_t L_t$ and $A_{t-1}L_{t-1}$, so they cancel. We now show \cref{eq:desideratum_one_step_acc}, for all $t \geq 1$:
\begin{align*}
\begin{aligned}
    A_t &G_t - A_{t-1}G_{t-1} - \eventindicator{t=1}\frac{1}{2}\norm{x_0-x^\ast}_2^2 \circled{1}[\leq] A_t(\mtilde(y_t)- \mtilde(x_t)) + \Ccancel[red]{a_t \mtilde(x_t)} + A_{t-1}(\mtilde(x_t) - \mtilde(y_{t-1})) \\
    & - \left( \Ccancel[red]{\sum_{i=1}^t a_i\mtilde(x_i)} + \sum_{i=1}^{t-1} \frac{a_i}{\gamma} \innp{\gradmtilde(x_i), z_t - x_i} + \frac{\norm{z_t - x_0}_2^2}{2}\right)  + A_t \delta  + \frac{A_tD}{\gamma}\sqrt{8\L \delta} - \frac{a_t}{\gamma}\innp{\gradmtilde(x_t), z_t - x_t} \\
    & + \left( \Ccancel[red]{\sum_{i=1}^{t-1} a_i\mtilde(x_i)} + \sum_{i=1}^{t-1} \frac{a_i}{\gamma} \innp{\gradmtilde(x_i), z_{t-1} - x_i} + \frac{\norm{z_{t-1} - x_0}_2^2}{2}\right)  - A_{t-1} \delta  - \frac{A_{t-1}\D}{\gamma}\sqrt{8\L \delta} \\
    &\circled{2}[\leq] \left(-\frac{A_t}{8\L }\norm{\gradmtilde(x_t)}_2^2 + A_t \delta \right) + \left( \frac{a_t}{\gamma}\innp{\gradmtilde(x_t), z_{t-1}-z_t} + \frac{a_t}{\gamma}\left(\sqrt{8\L\D^2\delta} + (9 + \frac{5a_t}{A_{t-1}}) \delta\right)\eventindicator{t=1}\right) \\
    & \ \ \ -\frac{1}{2}\norm{z_t - z_{t-1}}_2^2 + a_t\delta + \frac{a_t \D}{\gamma}\sqrt{8\L \delta}) \\
    &\circled{3}[\leq] \cancelto{{\color{blue}\leq 0}}{\left( \frac{a_t^2}{2\gamma^2} - \frac{A_t}{8\L } \right)\norm{\gradmtilde(x_t)}_2^2} + \frac{a_t}{\gamma}(\sqrt{8\L\D^2\delta} + (9 + \frac{5a_t}{\gamma A_{t-1}}) \delta)\eventindicator{t=1} + A_{t+1}\delta + \frac{a_t \D}{\gamma}\sqrt{8\L \delta} \\
    & \leq \frac{a_t}{\gamma}(\sqrt{8\L\D^2\delta} + (9 + \frac{5a_t}{\gamma A_{t-1}}) \delta)\eventindicator{t=1} +  A_{t+1}\delta + \frac{a_t \D}{\gamma}\sqrt{8\L \delta} \defi E_t.
\end{aligned}
 \end{align*}   
    where $\circled{1}$ holds by definition of $G_t$. In $\circled{2}$, for the first summand we apply \cref{lemma:approximate_descent_lemma}, and for the second summand we apply the guarantee of the binary line search, cf. \cref{lemma:binary_search} with $c = A_{t-1} \gamma/a_t $ after multiplying it by $a_t / \gamma$, and we merged the resulting term with the last one in the second line after $\circled{1}$. Also, in $\circled{2}$, we canceled some errors depending on $\delta$ and we also bounded the remaining terms in big parentheses after canceling terms, by noting that they consist of the subtraction $ - \ell_t(z_{t}) + \ell_t(z_{t-1})$ of a $1$-strongly convex function $\ell_t$, with minimizer $z_{t-1}$ by its definition, and so we upper bound it by $-\frac{1}{2}\norm{z_t - z_{t-1}}_2^2$ in $\circled{2}$. Now, in $\circled{3}$, we used Cauchy-Schwartz and Young's inequality $\innp{a, b} \leq \frac{1}{2}(a^2 + b^2)$ to cancel the terms depending on $z_{t-1}-z_t$ and note that the remaining terms proportional to $\norm{\gradmtilde(x_t)}_2^2$ are non-positive since the choice $a_t = \gamma^2 t/(8\L )$ makes $A_t / (8\L ) = \gamma^2 t(t+1) / (128 \L^2) \geq \gamma^2 t^2/(128\L ^2) = a_t^2 / (2\gamma^2)$.

    Thus, it only remains to bound the right hand side of \cref{eq:finishing_adgt_argument} with the value of $E_t$ above:

\begin{align*}
\begin{aligned}
    f(\hat{y}_T) - f(x^\ast) &\leq \frac{1}{A_T}\left( \frac{1}{2}\norm{x^\ast-x_0}_2^2 + \sum_{i=1}^T E_i \right) \circled{1}[\leq] \frac{8\L \norm{x^\ast - x_0}_2^2}{\gamma^2 T^2} + \frac{8\L }{\gamma^2 T^2}\sum_{i=1}^T E_i  \\
    &\circled{2}[\leq] \frac{\epsilon}{2} + \frac{\epsilon T}{16\L\D^2} \left[ T^2 (2\sqrt{8\L\D^2\delta} + (19 + 4T^2)\delta ) \right] \\
    &\circled{3}[\leq] \frac{\epsilon}{2} + \frac{\epsilon}{2} = \epsilon,
\end{aligned}
 \end{align*}
 where in $\circled{2}$ we used $T = \frac{4}{\gamma}\sqrt{\frac{\L\D^2}{\epsilon}}$, and we bounded $a_t \leq \gamma^2 T^2 / (8\L )$, $A_{t+1} \leq 4\gamma^2 T^2 / (16\L )$, $\gamma\leq 1$ and $a_t \leq 2A_{t-1}$ for $t \geq 2$. In $\circled{3}$, we used the value $\delta = \L\D^2 / (10 T^6)$.

Finally the total number of first-order oracle queries corresponds to $T$ times the complexity of the binary search, which is $\bigo{\frac{\L\D^2}{\epsilon}}$ by \cref{lemma:binary_search}, so the total complexity is $\bigotilde{\frac{1}{\gamma}\sqrt{\frac{\L\D^2}{\epsilon}}}$ as desired.
\end{proof}

\begin{proof}\linkofproof{lemma:approximate_descent_lemma}
We have the following, by adding and subtracting $\frac{\L}{2}\norm{y_t-x_t}_2^2$:
\begin{align*}
\begin{aligned}
    \mtilde(y_t) - \delta &\leq \M(y_t) \leq f(y_t) \pm \frac{\L}{2}\norm{y_t-x_t}_2^2  = \mtilde(x_t) - \frac{\L}{2}\norm{y_t-x_t}_2^2 = \mtilde(x_t) - \frac{1}{8\L }\norm{\gradmtilde(x_t)}_2^2.
\end{aligned}
\end{align*}
\end{proof}

\subsection{BINARY SEARCH}\label{app:binary_search}

\begin{proof}\linkofproof{lemma:binary_search} 
    Denote $\newtarget{def:intermediate_point_in_line_search}{\valpha} \defi \alpha y_{t-1} + (1-\alpha) z_{t-1}$.  For $f$ and $\lambda = 1 / (2\L )$, define the following function based on the Moreau envelope: $g(\alpha) = \M(\valpha)$, for some $t \geq 1$. We have that $g'(\alpha)={\innp{\nabla \M(\valpha), y_{t-1} - z_{t-1}}} = \frac{1}{\alpha}{\innp{\nabla \M(\valpha), \valpha - z_{t-1}}}$ is $\Lhat$-Lipschitz, with $\newtarget{def:smoothness_for_the_line_problem}{\Lhat} \defi 2\L  \norm{y_{t-1} - z_{t-1}}_2^2$. Indeed:
\[
    \abs{g'(\alpha_1) - g'(\alpha_2)} \leq \norm{\nabla \M(v_{\alpha_1})- \nabla \M(v_{\alpha_2})}_2\norm{y_{t-1}-z_{t-1}}_2 \leq 2\L \norm{y_{t-1}-z_{t-1}}_2^2,
\] 
    where in the last inequality we used the $2\L $-smoothness of $M$, cf. \cref{lemma:properties_of_prox_and_moreau_env_smoothness}. We will use $h(\alpha) \defi \mtilde(\valpha) \defi f(\walpha) + \frac{1}{2\lambda}\norm{\walpha - \valpha}_2^2$, for $\newtarget{def:prox_of_v_alpha}{\walpha} \defiin \argmindelta[w] \set{f(w) + \frac{1}{2\lambda}\norm{w - \valpha}_2^2}$, and $\hat{h}(\alpha) \defi \innp{\gradmtilde(\valpha), y_{t-1} - z_{t-1}} \defi \innp{\frac{1}{\lambda}(\valpha - \walpha), y_{t-1} - z_{t-1}}$ in order to implement the oracles in \cref{alg:binary_search}. So the number $\delta$-approximations of a $\prox[\lambda f + \indicator{\X}]$ in the lemma statement equals the number of times that we require access to $h(\alpha), \hat{h}(\alpha)$. By their definitions, we have:
\begin{equation}\label{eq:innacuracies_of_functions_in_a_segment}
    g(\alpha) \leq h(\alpha) \leq g(\alpha) + \delta_1  \text{ and } \abs{\hat{h}(\alpha) - g'(\alpha)} \leq \delta_2
\end{equation}
    where $\delta_1 = \delta$, cf. \cref{eq:approx_moreau_envelope_value}, and $\delta_2 = \sqrt{8\L \delta}\norm{y_{t-1}-z_{t-1}}_2 \leq \sqrt{8\L \delta}\D$ by using Cauchy-Schwartz on the expression resulting from substituting $g'(\alpha)$, $\hat{h}(\alpha)$ by their definitions, and applying \cref{eq:approx_gradient_innacuracy}, and $y_{t-1}, z_{t-1} \in \X$, $\diam(\X) = \D$.

We will show that we can do a binary search on $g$ up to certain accuracy by only having access to the adversarially noisy function values and derivatives $h$ and $\hat{h}$.

The stopping test is
\begin{equation}\label{eq:stop}
\alpha \hat h(\alpha) \le c\big(h(1)-h(\alpha)\big) + \tildeepsilon.
\end{equation}
for
\begin{equation}\label{eq:delta-conds}
    \newtarget{def:binary_search_accuracy}{\tildeepsilon} \defi \delta_2 + (9 + 5c) \delta_1,
\end{equation}
    where the output $x_t$ will be $\valpha = \alpha y_{t-1} + (1-\alpha) z_{t-1}$ for the final $\alpha$ that we compute. Note that ${\alpha \hat{h}(\alpha) =\innp{\gradmtilde(x_t), \alpha(y_{t-1} - z_{t-1})} = \innp{\gradmtilde(x_t), \valpha - z_{t-1}}}$.

    Define as well the $\tildeepsilon_{g} \defi \tildeepsilon-c\delta_1-\delta_2 = (9 + 4c)\delta_1$. If for some $\alpha$ we have
\begin{equation}\label{eq:true-criterion}
\alpha g'(\alpha)\ \le\ c\big(g(1)-g(\alpha)\big)\ +\ \tildeepsilon_{g},
\end{equation}
then \eqref{eq:stop} holds at $\alpha$. Indeed, by \cref{eq:innacuracies_of_functions_in_a_segment}: 
\[
    \alpha (\hat{h}(\alpha) - \delta_2) \leq \alpha g'(\alpha) \leq c(g(1)-g(\alpha)) + \tildeepsilon_g \leq c(g(1)-g(\alpha)) + \tildeepsilon_g + c\delta_1.
\] 

We first establish some invariants of our algorithm. If the check in Line \ref{line:first_check} of \cref{alg:binary_search} succeeds, that is $\hat{h}(1) \leq \tildeepsilon$, then \eqref{eq:stop} directly holds for $\alpha = 1$. The same is true if the second check succeeds, i.e., Line \ref{line:second_check} in \cref{alg:binary_search}. This is because then $\circled{1}$ below holds, for $\alpha =0$:
\[
    0\cdot \hat{h}(0) = 0 \circled{1}[\leq] c (h(1) - h(0)) + c\delta_1 \circled{2}[\leq]  c (h(1) - h(0)) + \tildeepsilon
\] 
and $\circled{2}$ holds by \cref{eq:delta-conds}. Note that this second check succeeds for the first iteration of \cref{alg:binary_search}. So from now on, we can assume 
\[
    \hat{h}(1) > \tildeepsilon \text{ and }  h(0) > h(1) + \delta_1.
\] 
We have the following invariants, for all $k \geq 0$, throughout the execution of \cref{alg:binary_search}:
\begin{enumerate}[label=(\arabic*)]
    \item $h(1) \geq h(b_k)$.
    \item $h(a_k) \geq h(1) - \delta_1 \geq h(b_k) - \delta_1$.
    \item $\hat{h}(b_k) \geq \tildeepsilon$.
\end{enumerate}

Indeed, (1) holds since it starts being $b_0 =1$ and after any update in Line \ref{line:update_of_ak_and_bk}, it is $h(b_k) < h(1) - \delta_1 \leq h(1)$. The first inequality of (2) holds at $k=0$ by the initial properties shown above and by any update made in Line \ref{line:update_of_ak_and_bk}, whereas the second inequality uses (1). For (3), we have that this property holds at $k=0$ by the initial properties shown above. For any time $k\geq 0$ for which $b_{k+1} \neq b_k$, we have that $b_{k+1} \in (0, 1]$ equals an $\alpha$ that did not trigger the termination of the while loop, and thus $\circled{1}$ below holds:
\[
    \hat{h}(b_{k+1}) \geq b_{k+1} \hat{h}(b_{k+1}) \circled{1}[>] c(h(1) - h(b_{k+1})) + \tildeepsilon \circled{2}[\geq] \tildeepsilon.
\] 
where $\circled{2}$ holds by the second inequality of (1).

We will now show by induction that throughout the execution of the algorithm, if we did not stop, the interval $[a_k, b_k]$ contains an interval of length
\begin{equation}\label{eq:def_Delta}
    \Delta \defi  \frac{(\tildeepsilon_{g}-\delta_1)}{(1+c/2)\Lhat} = \frac{8\delta_1}{\Lhat} > 0.
\end{equation}
such that \cref{eq:true-criterion} holds. 
For this, it is enough to show that there is a point $\alpha^\ast \in [a_k, b_k]$ such that $g(\alpha^\ast) = 0$ and $g(\alpha) \leq g(b_k)$. This is true since by $g(\alpha) \leq g(b_k) \leq h(b_k) \leq h(1) \leq g(1) + \delta_1$, and $\Lhat$-smoothness we have that for any $t \in [0, 1]$ with $|t-\alpha^\ast|\le \Delta / 2$:
\[
    g(1) - g(t) \geq g(\alpha^\ast) - \delta_1 - g(t) \geq -\frac{\Lhat}{2}(\alpha-t)^2 - \delta_1 \geq -\frac{\Lhat}{2}\abs{\alpha-t} - \delta_1 \geq -\frac{\Lhat\Delta}{4} - \delta_1,
\] 
\[
t g'(t) \leq |g'(t)|\le \Lhat|t-\alpha^\star|\le \Lhat\frac{\Delta}{2},
\]
and thus
\[
    t g'(t) \le c\big(g(1)-g(t)\big)+ \frac{\Lhat\Delta}{2}(1 + \frac{c}{2}) + \delta_1  \leq c\big(g(1)-g(t)\big) + \tildeepsilon_g.
\] 
So we only need to prove the existence of such an $\alpha^\ast$ inductively. Note that as long as we do not stop, the check of \cref{eq:stop} failed at the endpoints of $[a_k, b_k]$ so the good interval of length $\Delta$ would be completely in $[a_k, b_k]$.

We analyze two cases. By the invariant (2), these cases cover all possibilities. 
\paragraph{Case 1, $h(a_k)>h(b_k)+\delta_1$}
Recall that we have $\hat h(b_k)>\tildeepsilon$ by (3). Then
\[
g(a_k)\ge h(a_k)-\delta_1>h(b_k)\ge g(b_k),\qquad g'(b_k)\ge \hat h(b_k)-\delta_2> \tildeepsilon-\delta_2>0.
\]
Then, by continuity, there exists $\alpha^\ast\in(a_k,b_k)$ with $g'(\alpha^\ast)=0$ and $g(\alpha^\ast) \leq g(b_k)$. 

\paragraph{Case 2. $h(b_k) - \delta_1 \le h(a_k)\le h(b_k)+\delta_1$} 
Using (3) again, $g'(b_k)\ge \hat{h}(b_k)-\delta_2> \tildeepsilon-\delta_2 \geq \tildeepsilon_g > \Delta \Lhat$. Since we did not stop in the previous iteration and we halved the interval that inductively was of size greater than $\Delta$, we now have $b_k - a_k > \Delta / 2$. Define $\beta\defi b_k-\frac{\Delta}{2}\in(a_k,b_k]$ and note that by smoothness and \cref{eq:def_Delta}:
\[
    g'(\beta) \geq g'(b_k) - \frac{\Delta \Lhat}{2} > \frac{\Delta \Lhat}{2} > 0.
\] 
Applying smoothness again and using $g'(b_k) \geq \Delta \Lhat > 0$:
\[
g(\beta) \le g(b_k)-g'(b_k)\frac{\Delta}{2}+\frac{\Lhat}{2}\frac{\Delta^2}{4} \leq g(b_k) - \frac{3\Delta\Lhat}{8}
\circled{1}[\le]  g(b_k)-3\delta_1.
\]
where $\circled{1}$ holds by definition of $\Delta$, cf. \cref{eq:def_Delta}. Finally, using this last result, since $g\le h\le g+\delta_1$ and $h(a_k)\ge h(b_k) - \delta_1$, we have:
\[
g(\beta) \le g(b_k)-3\delta_1 \le h(b_k)-2\delta_1 \leq h(a_k)-\delta_1 \le g(a_k).
\]
We are now in the same situation as before. Since $a_k < \beta$, $g'(\beta) > 0$ and $g(a_k) \geq g(\beta)$, by continuity there must exist a point $\alpha^\ast \in (a_k, b_k)$ such that $g'(\alpha^\ast) = 0$ and $g(\alpha^\ast) \leq g(\beta) \leq g(b_k)$.

To conclude, the width of the interval $[a_k, b_k]$ was halved at each iteration, and the algorithm must stop before the length of this interval is $\leq \Delta$, so we successfully stop after at most
\[
\left\lceil \log_2 \left(\frac{1}{\Delta}\right)\right\rceil
=\left\lceil \log_2 \left(\frac{8\Lhat}{\delta_1}\right)\right\rceil \leq \left\lceil \log_2\left(\frac{8\L\D^2}{\delta_1}\right)\right\rceil
\]
iterations of the while loop. At each iteration we query $h$, $\hat{h}$ a constant number of times and before the loop we also do a constant number of queries. The total number of queries is $\bigo{\log(\frac{\L\D^2}{\delta})}$.
\end{proof}

\section{PROOFS FOR UNACCELERATED METHODS}\label{app:proofs_unacc_methods}

\subsection{Steepest Descent}\label{app:steepest_descent}

\begin{proof}\linkofproof{eq:reduced_gradient_for_cvxty_ineq}
  Define the quadratic $Q_x: \X\to \R$ as
  \begin{equation}
     Q_x(z)\defi f(x)+ \innp{\nabla f(x),z-x} + \frac{1}{2\eta}\norm{z-x}_2^2.
  \end{equation}
    For any $x$, define $x^+\defi \argmin_{z\in \X} Q_x(z)$ as the resulting point from taking a projected gradient descent step from $x$, cf. \cref{eq:projected_gradient_descent_update}. Note that $Q_x$ is $(1/\eta)$-strongly convex. Thus, for all $y\in \X$, we have
  \begin{align*}
      f(x) +& \innp{\nabla f(x),x^+-x} + \frac{1}{2\eta}\norm{x^+-x}_2^2 +\frac{1}{2\eta}\norm{x^+-y}_2^2 = Q_x(x^+)+\frac{1}{2\eta}\norm{x^+-y}_2^2 \\
      &\le Q_x(y) = f(x) + \innp{\nabla f(x),y-x} + \frac{1}{2\eta}\norm{y-x}_2^2.
  \end{align*}
  Or equivalently,
  \begin{align*}
      \innp{\nabla f(x),x^+-y}  &\leq  \frac{1}{2\eta}\norm{y-x}_2^2-\frac{1}{2\eta}\norm{x^+-x}_2^2-\frac{1}{2\eta}\norm{x^+-y}_2^2 \\ 
      &= \frac{1}{\eta}\innp{x-x^+, x^+-y} = \innp{\gradmap{\X}(x), x^+ - y}. 
  \end{align*}
\end{proof}

\begin{proof}\linkofproof{eq:progress_by_reduced_gradient_norm}
    Using the smoothness inequality, we get the result, after applying \cref{eq:reduced_gradient_for_cvxty_ineq} in $\circled{1}$:
    \begin{align*}
        f(x_{t+1}) - f(x_t) &\leq \innp{\nabla f(x_t), x_{t+1} - x_t} + \frac{\L}{2}\norm{x_{t+1} -x_t}_2^2 \\
        & \circled{1}[\leq] \innp{g_X(x_t), x_{t+1} - x_t} + \frac{1}{2\L }\norm{\gradmap{\X}(x_t)}_2^2 = - \frac{1}{2\L }\norm{\gradmap{\X}(x_t)}_2^2.
  \end{align*}
\end{proof}

\begin{proof}\linkofproof{thm:gd_for_quasar_convex}
    First, we argue that all the iterates stay in the set $\Y \defi \setsuch{x \in \X}{f(x) \leq f(x_0)}$ of diameter $\D$, which contains $x^\ast$ by definition. By induction, assuming $x_t$ is in the set, which holds by definition for the first iterate $x_0$, we show that $x_{t+1}$ is also in the set. The update rule implies:
    \[
        f(x_{t+1}) = \min_{x\in\X} \left\{ f(x_t) + \innp{\nabla f(x_t), x-x_t} + \frac{\L}{2}\norm{x-x_t}_2^2\right\}  \circled{1}[\leq] f(x_t) \circled{2}[\leq] f(x_0),
    \] 
    where $\circled{1}$ is obtained by setting $x=x_t$ and $\circled{2}$ is by the induction hypothesis. Thus, the property is proven.

    Now, for $t \geq 0$, let $a_t$ be weights to be determined and let $A_t = \sum_{i=0}^t = a_t$. Consequently, define $A_{-1} \defi 0$ and denote $g_t \defi \gradmap{\X}(x_t)$. For a minimizer $x^\ast \in \argmin_{x\in \X} f(x)$, we define the following lower bound $L_t$ on $f(x^\ast)$:

\begin{align*}
     \begin{aligned}
         A_t f(x^\ast) &\circled{1}[\geq] \sum_{i=0}^t a_i f(x_{i+1}) + \sum_{i=0}^t \frac{a_i}{\gamma} \Big(  \innp{\nabla f(x_{i}), x^\ast - x_{i+1}} + \innp{\nabla f(x_{i+1}) - \nabla f(x_i), x^\ast - x_{i+1}}  \Big) \\
         & \circled{2}[\geq]  \sum_{i=0}^t a_i f(x_{i+1}) + \sum_{i=0}^t \frac{a_i}{\gamma} \Big(  \innp{g_{i}, x^\ast - x_{i+1}} - \L\norm{x_{i+1}-x_i}_2\cdot\norm{x^\ast - x_{i+1}}_2  \Big) \\
         & \circled{3}[\geq]  \sum_{i=0}^t a_i f(x_{i+1}) - \sum_{i=0}^t \frac{2a_i}{\gamma} \norm{g_i}_2 \D \\  
         &\defi A_t L_t,
     \end{aligned}
\end{align*}
    where in $\circled{1}$ we applied a combination of the quasar-convexity inequalities given by points visited by the algorithm, and added and subtracted some terms so that in $\circled{2}$ we use \cref{eq:reduced_gradient_for_cvxty_ineq} on one term and Cauchy-Schwarz and gradient Lipschitzness of $f$ on the other. Finally in  $\circled{3}$ we applied Cauchy-Schwarz as well and used the bounds $\norm{x^\ast - x_{i+1}}_2 \leq \D$, which is due to the iterates staying in the set $\Y$.

    Define the gap $G_t = f(x_{t+1}) - L_t$. Our aim is to show $A_t G_t - A_{t-1} G_{t-1} \leq E_t$, for some value $E_t$ for all $t \geq 0$, so we can conclude $f(x_{t}) - f(x^\ast) \leq G_{t-1} \leq \frac{A_{t-2}G_{t-2} + E_{t-1}}{A_{t-1}} \leq \cdots \leq  \frac{1}{A_{t-1}}\sum_{i=0}^{t-1} E_i$. Let $D_t \defi \max_{i\in\set{0, \dots, t}}\norm{x^\ast - x_t}_2 \leq \D$. First, for $t \geq 1$, we have:
\begin{align*}
     \begin{aligned}
         A_t& G_t - A_{t-1} G_{t-1} = A_{t-1}(f(x_{t+1}) -f(x_{t}) ) + \Cbcancel[blue]{a_t f(x_{t+1})} \\
         & \ \ - \left( \Cbcancel[blue]{a_t f(x_{t+1})} + \Ccancel[red]{\sum_{i=0}^{t-1} a_i f(x_{i+1})} - \frac{2a_t}{\gamma}\norm{g_t}_2 \D + \Ccancel[blue]{\sum_{i=0}^{t-1} \frac{2a_i}{\gamma}\norm{g_i}_2 \D} \right)  \\
         & \ \ + \left(  \Ccancel[red]{\sum_{i=0}^{t-1} a_i f(x_{i+1})}  + \Ccancel[blue]{\sum_{i=0}^{t-1} \frac{2a_i}{\gamma}\norm{g_i}_2D} \right) \\
         &\circled{1}[\leq] - \frac{A_{t-1}}{2\L } \norm{g_t}_2^2 + \frac{2a_t}{\gamma}\norm{g_t}_2 \D \circled{2}[\leq] \Cbcancel[red]{- \frac{A_{t-1}}{2\L } \norm{g_t}_2^2} + \Cbcancel[red]{\frac{A_{t-1}}{2\L } \norm{g_t}_2^2} + \frac{2\L a_t^2}{\gamma^2 A_{t-1}}\D^2 \defi E_t.
     \end{aligned}
    \end{align*}
    where in $\circled{1}$ we used \cref{eq:progress_by_reduced_gradient_norm} and in $\circled{2}$ we used Young's inequality and canceled some terms.

    For $t=0$, the inequalities above hold up to before $\circled{2}$. Taking into account that $A_{-1}=0$, we have $A_0G_0 = A_0G_0 - A_{-1}G_{-1} = E_0 \leq 2a_0\L\D^2 /\gamma$, where we have $\norm{g_t}_2 \leq \L\D$ due to the fact that $x_t, x_{t+1}$ are in the level set of $x_0$.

    Finally, choosing $a_t=2t+2$ so that $A_t = (t+1)(t+2)$, we have, for all $t \geq 1$: 
    \begin{align*}
         \begin{aligned}
             f(x_t) - f(x^\ast) &\leq G_{t-1} \leq \frac{1}{A_{t-1}} \sum_{i=0}^{t-1} E_i = \frac{1}{A_{t-1}} \left( \sum_{i=1}^{t-1} \frac{2\L\D^2 a_i^2}{\gamma^2 A_{i-1}} + \frac{4\L\D^2}{\gamma} \right) \\
             &=  \frac{1}{\gamma^2 t(t+1)} \left(  \sum_{i=1}^{t-1} \frac{16\L\D^2(i+1)}{2i} + 4\gamma \L\D^2 \right) \leq \frac{(16+4\gamma/t)\L\D^2}{\gamma^2(t+1)} = \bigol{\frac{\L\D^2}{\gamma^2 t}}.
         \end{aligned}
    \end{align*}
    The numeric constant $20$ in the theorem statement comes from bounding the above using $\gamma \in (0, 1]$ and $t \geq 1$.
\end{proof}

\subsection{Frank-Wolfe}\label{app:frank_wolfe}

\begin{algorithm}[ht!]
    \caption{Frank-Wolfe algorithm}
    \label{alg:frank_wolfe}
   \begin{algorithmic}[1]
       \REQUIRE Function $f$ that is $\L$-smooth and $\gamma$-quasar convex in a compact convex set $\X$. Initial point $x_0 \in \X$. 
       \State $A_{-1} \gets 0$
       \vspace{0.2cm}
       \hrule
       \vspace{0.2cm}
       \FOR{$ t \gets 0 $ \textbf{to} $T-1$} 
           \State $a_t \gets 2t+2$; $A_t \gets A_{t-1} + a_t = \sum_{i=0}^t a_i = (t+1)(t+2)$
           \State $v_t \in \argmin_{v\in\X} \{\innp{\nabla f(x_{t}), v}\}$\label{line:computing_v_in_FW}
           \State {\color{mygray} Choose either Line \ref{line:step_size_gamma_dependent} or Line \ref{line:short_steps} for the whole execution}
           \State $\eta_t \gets \frac{a_{t} / \gamma}{A_{t-1} + a_t / \gamma}$\label{line:step_size_gamma_dependent}
           \State $\eta_t  = \min\left\{1,\frac{\innp{\nabla f(x_t),x_t-v_t}}{\L\norm{v_t-x_t}_2^2}\right\}$ \label{line:short_steps} \Comment{$= \argmin_{\eta\in[0,1]} \left\{\eta\innp{\nabla f(x_t),v_t-x_t} + \frac{L\eta^2}{2}\norm{v_t-x_t}_2^2\right\}$}
           \State  $x_{t+1} \gets (1-\eta_t)x_t + \eta_t v_t$ 
       \ENDFOR
        \State \textbf{return} $x_T$.
\end{algorithmic}
\end{algorithm}

\begin{proof}\linkofproof{thm:fw_quasar_analysis}
    We note that actually, this proof works for any norm as long as $\L$-smoothness and the diameter $\D$ are taken with respect to that norm. We prove here the theorem for the choice of the step size in Line \ref{line:computing_v_in_FW} where the one in Line~\ref{line:short_steps}, which is agnostic to $\gamma$, is \cref{prop:fw_short_step} below.
Let $a_t > 0$ to be determined later and define $A_t = A_{t-1} + a_t = \sum_{i=0}^{t} a_i$.
    Let $v_t \defiin \argmin_{v\in\X} \innp{\nabla f(x_t), v}$ and let $x_{t+1} \defi \frac{A_{t-1}}{A_{t-1}+a_{t} / \gamma}x_{t} + \frac{a_t / \gamma}{A_{t-1} + a_t / \gamma} v_{t}$ be defined as a convex combination of $x_{t}$ and $v_{t}$. Note $A_{-1}=0$ and $A_0 = a_0$, so $x_1 =v_0$. We define the following lower bound on $f(x^\ast)$ for a minimizer $x^\ast\in \argmin_{x\in \X} f(x)$:
    \begin{align*}
     \begin{aligned}
         A_t f(x^\ast)  &\circled{1}[\geq] \sum_{i=0}^t a_i f(x_{i+1}) + \sum_{i=0}^t \frac{a_i}{\gamma} \innp{\nabla f(x_{i+1}), x^\ast - x_{i+1}}  \\
         &\circled{2}[\geq]  \sum_{i=0}^t a_i f(x_{i+1}) + \sum_{i=0}^t \frac{a_i}{\gamma} \Big( \innp{\nabla f(x_{i}), v_{i+1} - x_{i+1}} + \innp{\nabla f(x_{i+1}) - \nabla f(x_{i}), v_{i+1} - x_{i+1}} \Big)  \\
         &\circled{3}[\geq]  \sum_{i=0}^t a_i f(x_{i+1}) + \sum_{i=0}^t \frac{a_i}{\gamma} \Big( \innp{\nabla f(x_{i}), v_{i+1} - x_{i+1}} - \L\norm{x_{i+1} - x_{i}}\cdot \D \Big) \defi A_t L_t, \\
     \end{aligned}
    \end{align*}
    where we applied quasar-convexity of $f$ in $\circled{1}$, the optimality of $v_{i+1}$ in $\circled{2}$, along with adding and subtracting some terms. And finally, we used Cauchy-Schwarz in $\circled{3}$ along with using gradient Lipschitzness of $f$, and also bounding $\norm{v_{i+1} - x_{i+1}}$ by $\diam(\X) = \D$.

    Now we define the gap $G_t \defi f(x_{t+1}) - L_t$. If we show $A_t G_t - A_{t-1} G_{t-1} \leq E_t$, for some number $E_t$ for all $t \geq 0$ (note $A_{-1} = 0$), then we will have $f(x_{t}) - f(x^\ast) \leq G_{t-1} \leq \frac{1}{A_{t-1}}\sum_{i=0}^{t-1} E_i$. Our aim is thus to have small $E_t$ and large $A_t$. We obtain the following, for $t \geq 0$:
    \begin{align*}
     \begin{aligned}
         A_t& G_t - A_{t-1} G_{t-1} = A_{t-1} (f(x_{t+1}) - f(x_{t})) + \Ccancel[red]{a_t f(x_{t+1})} \\
         & \ \ - \biggl( \Ccancel[red]{\sum_{i=0}^{t} a_i f(x_{i+1})} + \frac{a_t}{\gamma}(\innp{\nabla f(x_t), v_{t+1} - x_{t+1}} - \L\D\norm{x_{t+1} - x_t})    \\
         & \ \ + \Ccancel[blue]{\sum_{i=0}^{t-1} \frac{a_i}{\gamma}(\innp{\nabla f(x_i), v_{i+1} - x_{i+1}}   - \L\D\norm{x_{i+1} - x_i}} ) \biggr)\\
         & \ \ + \left( \Ccancel[red]{\sum_{i=0}^{t-1} a_i f(x_{i+1})} + \Ccancel[blue]{\sum_{i=0}^{t-1}  \frac{a_i}{\gamma}(\innp{\nabla f(x_i), v_{i+1} - x_{i+1}} - \L\D\norm{x_{i+1} - x_i} }) \right) \\
         &\circled{1}[\leq] \innp{\nabla f(x_t), A_{t-1}(x_{t+1} - x_{t}) - \frac{a_t}{\gamma}(v_{t+1} - x_{t+1})} + \frac{A_{t-1}\L}{2}\norm{x_{t+1}-x_t}^2   + \frac{a_t}{\gamma} \L\D\norm{x_{t+1}-x_t}  \\
         &\circled{2}[\leq] \frac{a_t}{\gamma}\innp{\nabla f(x_t), v_{t} -  v_{t+1}} + \frac{a_t^2 A_{t-1}\L\D^2}{2\gamma^2(A_{t-1} + a_t / \gamma)^2} + \frac{a_t^2 \L\D^2}{\gamma^2(A_{t-1}+a_t / \gamma)} \\
         &\circled{3}[\leq] \frac{3\L\D^2a_t^2}{2A_{t}\gamma^2} \defi E_t.  \\
     \end{aligned}
    \end{align*}
    In $\circled{1}$, we applied smoothness on the first term, and canceled and grouped some terms. In $\circled{2}$ we used that the definition of $x_{t+1}$ implies $A_{t-1} (x_{t+1}-x_t) = \frac{a_t}{\gamma} (v_{t}-x_{t+1})$ in order to obtain a term depending on $\nabla f(x_t)$ that is nonpositive by the optimality condition of the problem defining $v_t$. We also use that the definition of $x_{t+1}$ also implies $ x_{t+1} - x_t = \frac{a_t / \gamma}{A_{t-1} + a_t / \gamma} (v_t -x_t)$, so we use this to bound the other two distance terms and then bound $\norm{v_t -x_t}\leq \D$. In $\circled{3}$ we dropped the gradient term and we also used $\gamma \leq 1$ for the term $A_{t-1} + a_t / \gamma \geq A_t$ in two denominators. Note that all of these computations are valid for $t=0$.

    Finally, choosing $a_t = 2t+2$, we have $A_t = \sum_{i=0}^t a_i =  (t+1)(t+2)$, so for all $t \geq 1$:
\begin{align*}
     \begin{aligned}
         f(x_{t})- f(x^\ast)\leq G_{t-1}\leq\frac{1}{A_{t-1}}\sum_{i=0}^{t-1} E_i = \frac{1}{A_{t-1}}\sum_{i=0}^{t-1} \frac{3\L\D^2 a_i^2}{2 A_{i}\gamma^2}  = \frac{1}{t(t+1)}\sum_{i=0}^{t-1}\frac{6\L\D^2(i+1)}{(i+2)\gamma^2} < \frac{6 \L\D^2}{(t+1)\gamma^2}.
     \end{aligned}
    \end{align*}
\end{proof}

\begin{proposition}[Frank-Wolfe with short steps]\label{prop:fw_short_step}
    Let $f:\Rd\to\R$ be $\L$-smooth and $\gamma$-quasar convex with center $x^\ast$ in a compact convex set $\X$ of diameter $\D$. Then, for every $T\geq 1$, the iterates of \cref{alg:frank_wolfe} using the short step in Line~\ref{line:short_steps} satisfy
    \[
        f(x_T)-f(x^\ast)
        \leq
        \frac{2\L\D^2}{\gamma\bigl(2+\gamma(T-1)\bigr)}
        \leq
        \frac{2\L\D^2}{\gamma^2(T+1)}.
    \]
\end{proposition}

\begin{proof}
    For $t\geq1$, define $\widehat{A}_t\defi(t-1+2/\gamma)^2$, and for $t\geq2$, define
    \[
        \widehat{a}_t
        \defi
        \widehat{A}_t-\widehat{A}_{t-1}
        =
        2t+\frac{4}{\gamma}-3.
    \]
    Thus, $\widehat{A}_1+\sum_{i=2}^t\widehat{a}_i=\widehat{A}_t$. For every $t\geq1$, we define the following lower bound:
    \begin{align*}
        \widehat{A}_t f(x^\ast)
        &\circled{1}[\geq]
        \widehat{A}_1 f(x_1)
        +\frac{\widehat{A}_1}{\gamma}
        \innp{\nabla f(x_1),x^\ast-x_1}
        +\sum_{i=2}^t\widehat{a}_i f(x_{i-1})
        +\sum_{i=2}^t\frac{\widehat{a}_i}{\gamma}
        \innp{\nabla f(x_{i-1}),x^\ast-x_{i-1}}\\
        &\circled{2}[\geq]
        \widehat{A}_1 f(x_1)
        +\frac{\widehat{A}_1}{\gamma}
        \innp{\nabla f(x_1),v_1-x_1}
        +\sum_{i=2}^t\widehat{a}_i f(x_{i-1})
        +\sum_{i=2}^t\frac{\widehat{a}_i}{\gamma}
        \innp{\nabla f(x_{i-1}),v_{i-1}-x_{i-1}}
        \defi
        \widehat{A}_tL_t.
    \end{align*}
    Here $\circled{1}$ uses quasar convexity, and $\circled{2}$ uses $x^\ast\in\X$ and the optimality of the linear minimization oracle. Define $G_t\defi f(x_t)-L_t$. Since $L_t\leq f(x^\ast)$, we have $f(x_t)-f(x^\ast)\leq G_t$.

    If $\innp{\nabla f(x_0),x_0-v_0}=0$, then quasar convexity and the optimality of $v_0$ imply that $x_0$ is optimal, so assume $\innp{\nabla f(x_0),x_0-v_0}>0$. The initial potential is bounded as
    \begin{align*}
        \widehat{A}_1G_1
        &=
        \frac{\widehat{A}_1}{\gamma}
        \innp{\nabla f(x_1),x_1-v_1}
        =
        \frac{\widehat{A}_1}{\gamma}\left(
        \innp{\nabla f(x_1)-\nabla f(x_0),x_1-v_1}
        +\innp{\nabla f(x_0),x_1-v_1}
        \right)\\
        &\circled{1}[\leq]
        \frac{\widehat{A}_1}{\gamma}\left(
        \L\norm{x_1-x_0}_2\D
        +\innp{\nabla f(x_0),x_1-v_0}
        \right)\\
        &\circled{2}[=]
        \frac{\widehat{A}_1}{\gamma}\left(
        \L\eta_0\norm{v_0-x_0}_2\D
        +(1-\eta_0)\innp{\nabla f(x_0),x_0-v_0}
        \right)\\
        &\circled{3}[\leq]
        \frac{\widehat{A}_1\L\D^2}{\gamma}
        \bigl(\eta_0+\eta_0(1-\eta_0)\bigr)\\
        &\leq
        \frac{\widehat{A}_1\L\D^2}{\gamma}
        =
        \frac{4\L\D^2}{\gamma^3}.
    \end{align*}
    In $\circled{1}$, we used the $\L$-Lipschitzness of the gradient, $\norm{x_1-v_1}_2\leq\D$, and the optimality of $v_0$. In $\circled{2}$, the definition of $x_1$ implies
    \(
        x_1-x_0=\eta_0(v_0-x_0),
        x_1-v_0=(1-\eta_0)(x_0-v_0).
    \)
    In $\circled{3}$, we used $\norm{v_0-x_0}_2\leq\D$ and the short-step identity
    \[
        (1-\eta_0)\innp{\nabla f(x_0),x_0-v_0}
        =
        (1-\eta_0)\L\eta_0\norm{v_0-x_0}_2^2,
    \]
    which also holds when $\eta_0=1$. The final equality uses $\widehat{A}_1=4/\gamma^2$.

    For $t\geq2$, let
    \[
        \bar{\eta}_{t-1}
        \defi
        \frac{2}{\gamma(t-1)+2}
    \]
    be the $\gamma$-dependent step in Line~\ref{line:step_size_gamma_dependent}. The proof weights satisfy
    \[
        \widehat{A}_t\bar{\eta}_{t-1}-\frac{\widehat{a}_t}{\gamma}
        =
        \frac{1}{\gamma},
        \qquad
        \widehat{A}_t\bar{\eta}_{t-1}^2
        =
        \frac{4}{\gamma^2}.
    \]
    The potential difference is bounded by the single chain
    \begin{align*}
        \widehat{A}_t&G_t-\widehat{A}_{t-1}G_{t-1}
        =
        \widehat{A}_t\bigl(f(x_t)-f(x_{t-1})\bigr)
        +\frac{\widehat{a}_t}{\gamma}
        \innp{\nabla f(x_{t-1}),x_{t-1}-v_{t-1}}\\
        &\circled{1}[\leq]
        \widehat{A}_t\left(
        -\bar{\eta}_{t-1}\innp{\nabla f(x_{t-1}),x_{t-1}-v_{t-1}}
        +\frac{\L\bar{\eta}_{t-1}^2}{2}\norm{v_{t-1}-x_{t-1}}_2^2
        \right)
        +\frac{\widehat{a}_t}{\gamma}
        \innp{\nabla f(x_{t-1}),x_{t-1}-v_{t-1}}\\
        &=
        -\frac{1}{\gamma}\innp{\nabla f(x_{t-1}),x_{t-1}-v_{t-1}}
        +\frac{2\L}{\gamma^2}\norm{v_{t-1}-x_{t-1}}_2^2\\
        &\leq
        \frac{2\L\D^2}{\gamma^2}
        \defi
        E_t.
    \end{align*}
    In $\circled{1}$, the short step in Line~\ref{line:short_steps} gives the smallest smoothness upper bound over $\eta\in[0,1]$, so it is no larger than the bound obtained with $\bar{\eta}_{t-1}$, the choice in Line~\ref{line:step_size_gamma_dependent}. In the last inequality, we used the optimality of $v_{t-1}$ so that $\innp{\nabla f(x_{t-1}), v_{t-1}} \leq \innp{\nabla f(x_{t-1}), x_{t-1}}$ and $\norm{v_{t-1}-x_{t-1}}_2\leq\D$.
    Finally, for every $T\geq1$,
    \begin{align*}
        f(x_T)-f(x^\ast)
        &\leq
        G_T
        \leq
        \frac{1}{\widehat{A}_T}
        \left(
        \widehat{A}_1G_1
        +\sum_{t=2}^T E_t
        \right)
        \circled{1}[\leq]
        \frac{1}{(T-1+2/\gamma)^2}
        \left(
        \frac{4\L\D^2}{\gamma^3}
        +\frac{2\L\D^2(T-1)}{\gamma^2}
        \right) \\
        &=
        \frac{2\L\D^2}{\gamma\bigl(2+\gamma(T-1)\bigr)}
        \circled{2}[\leq]
        \frac{2\L\D^2}{\gamma^2(T+1)}.
    \end{align*}
    Here $\circled{1}$ uses the bounds on $\widehat{A}_1G_1$ and $E_t$ and the choice of $\widehat{A}_T$, and $\circled{2}$ uses $\gamma\leq1$.
\end{proof}

\end{document}